\newif\ifblog
\newif\iftex
\def\emph#1{\textit{#1}}
\newtheorem{theorem}{Theorem}
\newtheorem{lemma}[theorem]{Lemma}
\newtheorem{definition}[theorem]{Definition}
\newtheorem{proposition}[theorem]{Proposition}
\newenvironment{proof}{\noindent {\sc Proof:}}{$\Box$} %\medskip} 
\title{The density evolution of the killed Mckean-Vlasov process}
\author{
Peter E. Caines
\thanks{Department of Electrical and Computer Engineering, McGill University, Montreal,  Canada. peterc@cim.mcgill.ca }
\and{
Daniel Ho,
\thanks{Department of Mathematics, City
University of Hong Kong, Hong
Kong. madaniel@cityu.edu.hk.}
}
\and {
Qingshuo Song,
\thanks{Department of Mathematics, Worcester Polytechnic University, 
and Department of Mathematics,
City University of Hong Kong, Hong
Kong. qsong@wpi.edu.
}
}
\and{
\thanks{This research has been partially supported by GRF CityU 11201518.}
}
}
\date{}                                           % Activate to display a given date or no date
\begin{document}

\maketitle

\begin{abstract}
The study of the density evolution naturally arises in Mean Field Game theory for the estimation of the density of the large population dynamics.
In this paper, we study the density evolution of McKean-Vlasov 
stochastic differential equations in the presence of an absorbing boundary,
where the solution to such equations corresponds to the dynamics of partially killed large populations.
By using a fixed point theorem, we show that the density evolution is characterized as the unique solution of an 
integro-differential Fokker-Planck equation 
with Cauchy-Dirichlet data.
\end{abstract}

%\more

\section{Introduction}
Let $W$ be a $\mathbb R^{d}$-valued Brownian motion 
on a filtered probability space 
$(\mathbb P, \Omega, \mathcal F, \{\mathcal F_{t}\}_{t\ge 0})$, and 
we consider the stochastic differential equation
$$ d X_{t} =  \bar b(X_{t}, t) dt + dW_{t}; \ X_{0} \sim m_{0},$$
where $X_{0}$ is the given initial state with its probability density $m_{0}$ 
on $\mathbb R^{d}$.
A well known result, see  for instance Hormander's Theorem in 
Section V.38 of \cite{RW00}, 
says that the density $m(x, t) = \mathbb P(X_{t} \in dx) /dx$ satisfies
Fokker-Planck equation (FPK) with Cauchy data
$$ \left\{
\begin{array}
 {ll}
\partial_{t} m =  - {\rm div}_{x} ( \bar b \ m) + \frac 1 2 \Delta m, & (0, \infty) \times \mathbb R^{d}, \\
m(x, 0) = m_{0}(x), & x\in \mathbb R^{d}.
\end{array}\right.
$$

Recently, 
Mean Field Game theory attracted 
a great deal of attention in the control and other fields after
%great attention from the control field after 
it was 
initiated in a series of founding works  by
Huang, Caines, and Malhame (e.g., \cite{HCM03, HCM06, HCM07}), and independently in that of Lasry and Lions (e.g., \cite{LL07}). This led to 
extensive studies on the density evolution of 
McKean-Vlasov type stochastic differential equations  (MV-SDE) of 
the following form:
\begin{equation}
 \label{eq:mvsde01}
dX_{t} =  b(X_{t}, \mathbb E[X_{t}^{p}]) dt + dW_{t};\ \ X_{0} \sim m_{0}
\end{equation}
for some positive integer $p$, 
see, for instance, \cite{BFY13, Car13, GPV16} and the references therein.
It is well known that the density follows the integro-differential FPK
\begin{equation}
  \left\{
\begin{array}
 {ll}
\partial_{t} m =  %\displaystyle 
- {\rm div}_{x} ( b (x, \int_{\mathbb R^{d}} x^{p} m(x, t) dx) \ m) + \frac 1 2 \Delta m, & (0, \infty) \times \mathbb R^{d}, \\
m(x, 0) = m_{0}(x), & x\in \mathbb R^{d}.
\end{array}\right.
\label{eq:fpk03}\end{equation}

In this paper, we study a similar  integro-differential FPK associated to 
population density dynamic of the process \eqref{eq:mvsde01} 
killed at the boundary of the unit ball. 
Indeed, 
the setup of killed population has been already applied to 
mean field games in different 
contexts recently, see for instance, \cite{CMZ12}, 
\cite{CFS15}, \cite{FPC17}, and 
\cite{CF18}. 
One of the closest references to our setup 
might be
\cite{CF18}, where
FPK similar to our current study has been briefly sketched, 
see  Page 2217 \cite{CF18}. 
However, the corresponding study on the solvability and regularities of 
FPK for the killed population is not available to the best of our knowledge.

It is noted that, due to the loss of population at the boundary, 
the killed process is a strictly submarkovian in $B_{1}$. As a result, its associated FPK 
is given with Dirichlet data along the boundary 
in addition to the counter-part FPK of unkilled process \eqref{eq:fpk03}. 
Our goal is to 
legitimate the following statement: 
{\it Under appropriate conditions on the drift function $b(\cdot, \cdot)$,
the density of killed process is the classical 
solution of its associated Integro-Differential FPK with initial-boundary data}.
%The proof of the above claim, of course, shall include the unique solvabilities of \eqref{eq:sde01} and \eqref{eq:ifpk01}, respectively.

In this paper, we first present the precise 
problem formulation and its main result on the characterization of the density in Section 2. 
%$$Z^{N}_{t}:= \frac 1 {L_{t}} \sum_{i = 1}^{N} X^{i, N}_{t} I_{(t, \infty)}(\tau^{i,N}).$$
In Section 3, we present the detailed proof, which is mainly 
based on the argument of the Leray-Schauder fixed point theorem.
Section 4 is the summary, and the last section is Appendix for some useful facts collected from existing literatures.

\section{Motivation, problem setup and main results}

\subsection{Motivation}

The role of FPK equation in Mean Field Game theory can be illustrated 
via the following scenario. 
\begin{itemize}
 \item Given a particle system of population size $N$, 
 our interest is the evolution of the mean field term given by
\begin{equation}
 \label{eq:YN01}
Z^{N}_{t}
:= \frac 1 N \sum_{i = 1}^{N} (X^{i, N}_{t})^{p}.
\end{equation} 
In the above, we only consider $p$ as a given positive integer.
If $p=1$, then the mean field term $Z_{t}^{N}$ may refer to the population $p$th mean.
Suppose
 the position $X_{t}^{i, N}$ of the $i$th particle
 follows the dynamics
 \begin{equation}
 \label{eq:mvsde01v01}
dX_{t}^{i,N} =  b(X^{i,N}_{t}, Z^{N}_{t}) dt + dW^{i}_{t};\ \ X_{0}^{i,N} \sim m_{0}, \ i = 1, 2, \ldots, N.
\end{equation}
 driven by an independent 
Brownian motion $W^{i}$ 
with i.i.d. initial distribution $X_{0}^{i, N} \sim m_{0}$,
then one could solve a system of $N$-equations of \eqref{eq:mvsde01v01} to track the 
mean field term  $Z_{t}^{N}$, which bears high computation cost as $N$ becomes large.

On the other hand, if $N$ is a large number,  the
law of large number implies that 
$Z^{N}_{t}$ can be effectively approximated by a deterministic process 
$\int_{\mathbb R^{d}} x^p m(x, t) dx$ for large number $N$ in the sense that
$$\lim_{N\to \infty}Z^{N}_{t} = 
\int_{\mathbb R^{d}} x^{p} m(x, t) dx, \quad \hbox{ almost surely } \forall t>0. $$
In the above, the function $m(\cdot, t)$ is the density of $X_{t}$ of \eqref{eq:mvsde01} 
at its continuum limit, or equivalently is the solution of FPK \eqref{eq:fpk03}. 
Indeed, according to 
the application of the Hewitt and Savage Theorem (see Theorem 5.14 of 
\cite{Car13}), the
FPK \eqref{eq:fpk03} can be used for a fairly broadly defined class of 
symmetric functionals in a large system at its continuum.
\end{itemize}

A similar argument may also be resorted to for a partially killed $N$-particle system: 
\begin{itemize}
 \item Suppose the process $X^{i,N}$ of the $i$th particle following 
 MV-SDE \eqref{eq:mvsde01v01}  has an open unit ball $B_{1}$ as its 
 state space. This means that 
$X^{i, N}$ explodes (absorbed) at
the first exit time $\zeta^{i, N}$ from $B_{1}$.
If we denote the population at time $t$ 
by $$L_{t} := \sum_{i = 1}^{N} I_{(t, \infty)}(\zeta^{i,N}),$$
then the $L_{t}$ is monotonically decreasing from 
the initial size $L_{0} = N$
to $0$ as $t$ goes to infinity.
If we again consider
the mean field term, then the law of large number implies that  
\begin{equation}
\label{eq:YN02}
Y^{N}_{t}
:= \frac 1 {N} \sum_{i = 1}^{N} (X^{i, N}_{t})^{p} I_{(t, \infty)}(\zeta^{i,N})
 \to \int_{B_{1}} x^{p} m(x, t) dx,
\quad \hbox{ almost surely } \forall t>0.
\end{equation}
In the above, 
$m(\cdot, t)$ is the density of killed generic process $X_{t}$ in $B_{1}$ with its absorbing boundary.
\end{itemize}

Therefore, a characterization of the density $m$ in terms of 
its associated FPK is desirable for the killed 
process, however it's not available in the literature to the best of authors' knowledge.

\subsection{Problem setup}
$W$ is a $\mathbb R^{d}$-valued Brownian motion on a filtered probability space 
$(\mathbb P, \Omega, \mathcal F, \{\mathcal F_{t}\}_{t\ge 0})$.
Let $B_{1}$ be the open 
unit ball in $\mathbb R^{d}$ and $p$ be a positive integer. 
We consider MV-SDE of the form \eqref{eq:mvsde01} with the only difference on the state space $B_{1}$. 
To emphasize its state space, we
write the state space $B_{1}$
after semicolon together with given initial state $X_{0}$, 
\begin{equation} \label{eq:sde01}
d X_{t} =  (b(X_{t}, \mathbb E[X_{t}^{p}; B_1]) dt + dW_{t}) I_{B_1}(X_t);
 \quad X_{0} \sim m_{0}.
\end{equation}
In the above, the function $b: B_{1} \times B_{1} \ni (x, y)\mapsto b(x,y) \in \mathbb R^{d}$ 
is a given drift and $X_{0}: \Omega \mapsto B_{1}$ 
is a given  $\mathcal F_{0}$-measurable its initial state with its density $m_{0}$ on $B_{1}$.
Moreover, the mean field term in the 
drift function is understood 
as   $$\mathbb E[X_{t}^{p}; B_1] := \mathbb E[X_{t}^{p} I_{B_{1}}(X_{t})]
= \int_{B_{1}} x^{p} m(x, t) dx.$$

Note that, the boundary $\partial B_{1}$ is set
to be the  absorbing boundary, i.e. once $X_{t}$ reaches the 
cemetery $\partial B_{1}$, the term $I_{B_1}(X_t)$ leads to $dX_t = 0$ afterwards 
and it never returns to $B_{1}$. 
As in the convention, we refer 
$$\zeta = \inf\{t>0: X_{t} \notin B_{1}\}$$ 
as the lifetime of $X$.
%, and assign $X_{t}$ for all $t\ge \zeta$ a cemetery symbol $\o$, 
%i.e. $X_{t}(\omega) = \o$ for all $t\ge \zeta (\omega)$. 
We are interested in
\begin{enumerate}
 \item the existence and uniqueness of the solution of
 MV-SDE \eqref{eq:sde01} up to the lifetime $\zeta$, and further 
 \item its density evolution $m(x, t) = \mathbb P(X_{t} \in dx)/dx$ for 
$(x, t) \in B_{1} \times \mathbb R^{+}$, if it exists.
\end{enumerate}
The above questions are well studied for MV-SDE
if the state $X_{t}$ is valued in the whole space $\mathbb R^{d}$.
However, if the state space is the bounded set $B_{1}$, one shall 
note that $\mathbb P(X_{t} \in B_{1}) <1$ for $t>0$, hence the process
$X_{t}$ has to be a submarkovian
(see the definition of submarkvoian in Page 9 of \cite{CW05}). Indeed, due to the positive probability of the 
explosion on any time interval $(0, t)$, one can obtain, 
$$\int_{B_{1}} m(x, t) dx = \mathbb P(X_{t}\in B_{1}) = 1 - \mathbb P(X_{t} \in \partial B_1) < 1, \ \forall t>0,$$
as long as the solution $X$ of
\eqref{eq:mvsde01} exists.
To proceed, let us be precise with the definition of the solution of \eqref{eq:sde01}.
\begin{definition}\label{eq:d01}
Given a filtered probability space 
$(\mathbb P, \Omega, \mathcal F, \{\mathcal F_{t}\}_{t\ge 0})$
with a $\mathbb R^{d}$-valued Brownian motion $W$, 
and a $B_{1}$-valued 
$\mathcal F_{0}$-measurable random variable $X_{0}$ 
for its initial state,  a process 
$X$ is said to be the solution of \eqref{eq:sde01} (up to its explosion)
with its state space $B_{1}$, if 
%the following identity holds:
 %$$X_{t} = \tilde X_{t} \cdot I_{B_{1}}(\tilde X_{t}) + \o \cdot I_{B_{1}^{c}}(\tilde X_{t}), $$
 %where 
 $(X, \beta)$ satisfies both
\begin{equation}
 \label{eq:sde03}
 d X_{t} = (b(X_{t}, \beta_{t}) dt + dW_{t}) \cdot I_{B_{1}}(X_{t}), 
 \quad X_{0} \sim m_{0},
\end{equation}
and 
\begin{equation}
 \label{eq:beta01}
 \beta_{t} = \mathbb E[X^{p}_{t} I_{B_{1}}(X_{t})].
\end{equation}
\end{definition}

For a two-variable function 
$m: \mathbb R^{d} \times  \mathbb R \ni (x, t) \mapsto m(x, t) \in \mathbb R$, we often treat $m_{t}(x) = m(x, t)$ as a function-valued evolution in time.
If $\beta_{t}$ were given by a known deterministic process in \eqref{eq:sde03} in priori, then
one can directly write its density evolution as of FPK \eqref{eq:fpk02}
with the substitution $\bar b( x, t) = b(x, \beta_{t})$. At least heuristically, 
one can next use \eqref{eq:beta01}  to 
replace $\beta_{t}$ in FPK  \eqref{eq:fpk02} by $\int_{B_{1}} x^{p} m_{t}dx$, 
and obtain a new FPK of the following form for its density evolution:
\begin{equation}
 \label{eq:ifpk01}
 \left\{ 
\begin{array}{ll}
 \partial_{t} m =\frac 1 2 \Delta m - {\rm div}_{x} ( m_{t}(x) \ 
 b(x, \int_{B_{1}} x^{p} m_{t}(x)dx) ), \ &
 \hbox{ on }  B_{1} \times (0, \infty); 
\\ m(x, 0) = m_{0}(x), \ & \hbox{ on }  \bar B_{1};
\\
m(x, t) = 0, \ & \hbox{ on } \partial B_{1} \times (0, \infty).
\end{array}
\right.
\end{equation}
In the above \eqref{eq:ifpk01}, the divergence term shall reads
$${\rm div}_{x} (m_{t}(x) b(x, y)) = \sum_{i= 1}^{d} \partial_{x_{i}} (m_{t}(x) b^{i} 
(x, y)), \hbox{ where } y = \int_{B_{1}} x^{p} m_{t}(x) dx.$$

\subsection{Main result}
To present our main results, we shall impose the following regularity assumptions throughout the text:
\begin{itemize}
 \item [(A1)] $b\in C^{1 + \gamma} (B_{1}^{2}; \mathbb R^{d})$ and 
 $m_{0} \in C_{0}^{2+\gamma}(B_{1})$ for some $\gamma \in (0, 1]$. 
\end{itemize}
In the above assumption, we adopt the notion 
$C^{k+ \gamma}(B_{1}^{2}; \mathbb R^{d})$ 
from \cite{Kry96}
to denote the collection of all functions 
$f: B_{1}^{2}\mapsto \mathbb R^{d}$ which has 
all of their $k$th derivatives are 
$\gamma$-H\"older continuous.
By $C_{0}^{k+ \gamma}(B_{1})$, we refer the space of
functions $f:B_{1} \mapsto \mathbb R$ in $C^{k+ \gamma}(B_{1}, \mathbb R)$ which is smoothly vanishing to zero outside of its domain, see more details about H\"older space in Definition \ref{d:hs01} of
Appendix.

\begin{theorem}
 \label{t:main}
 If we assume (A1), then there exists a solution of MV-SDE 
 \eqref{eq:sde01}, 
 %which has its density solves 
 whose density satisfies FPK \eqref{eq:ifpk01}.
\end{theorem}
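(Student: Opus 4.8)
The plan is to decouple the McKean--Vlasov nonlinearity by treating the mean field term $\beta$ as a frozen input, solving the resulting linear problem, and then recovering the coupling through a fixed point. Fix a horizon $T>0$ and a path $\beta:[0,T]\to\overline{B_1}$ and set $\bar b(x,t):=b(x,\beta_t)$. I would first solve the linear Cauchy--Dirichlet Fokker--Planck equation obtained from \eqref{eq:ifpk01} by replacing $\int_{B_1}x^{p}m_{t}\,dx$ with $\beta_t$. By (A1) the frozen drift $\bar b$ is spatially $C^{1+\gamma}$, and --- provided $\beta$ is itself $\gamma/2$-H\"older in time --- it is jointly H\"older, so classical parabolic Schauder theory (e.g.\ \cite{Kry96}) yields a unique classical solution $m^{\beta}\in C^{2+\gamma,\,1+\gamma/2}(\overline{B_1}\times[0,T])$ with $m^{\beta}(\cdot,0)=m_0$ and $m^{\beta}=0$ on $\partial B_1\times(0,T]$; the compatibility needed at the parabolic corner is exactly supplied by $m_0\in C_0^{2+\gamma}(B_1)$. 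On the probabilistic side this $m^{\beta}$ is precisely the sub-probability density of the diffusion $dX_t=\bar b(X_t,t)\,dt+dW_t$ killed on $\partial B_1$, so the analytic and stochastic pictures coincide for the frozen problem.

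Next I would introduce the solution map
\[
\Phi(\beta)_t:=\int_{B_1}x^{p}\,m^{\beta}(x,t)\,dx
\]
and seek a fixed point $\beta^{*}=\Phi(\beta^{*})$ in a suitable Banach space, say $C^{\gamma/2}([0,T];\mathbb{R}^{d})$. A fixed point makes the consistency relation \eqref{eq:beta01} hold, so the pair $(X^{\beta^{*}},\beta^{*})$ satisfies both \eqref{eq:sde03} and \eqref{eq:beta01}, hence solves \eqref{eq:sde01}, and its density $m^{\beta^{*}}$ solves \eqref{eq:ifpk01}, giving Theorem \ref{t:main}. To invoke the Leray--Schauder theorem I need (i) an a priori bound on any solution of $\beta=\lambda\Phi(\beta)$, $\lambda\in[0,1]$, (ii) continuity of $\Phi$, and (iii) compactness of $\Phi$. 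The a priori bound is essentially free: for $x\in B_1$ one has $|x^{p}|=\|x\|_{2p}^{p}\le\|x\|_{2}^{p}=|x|^{p}<1$, and $\int_{B_1}m^{\beta}_t\,dx\le 1$, so $|\Phi(\beta)_t|<1$ uniformly; thus every candidate fixed point stays in a fixed ball inside $B_1$, where moreover $\bar b$ is well defined and continuous. Continuity of $\Phi$ would then follow from continuous dependence of the Schauder solution $m^{\beta}$ on its coefficient $\bar b$, hence on $\beta$.

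Compactness I would extract from parabolic smoothing: $\beta\mapsto m^{\beta}$ maps a bounded set of inputs into a bounded set of the higher-order space $C^{2+\gamma,\,1+\gamma/2}$, and differentiating under the integral sign shows $t\mapsto\Phi(\beta)_t$ inherits $C^{(1+\gamma)/2}$ regularity in time, which embeds compactly into $C^{\gamma/2}([0,T])$. This is also where I expect the principal difficulty. The Schauder estimates presuppose that the drift is H\"older in time, so the entire scheme must be run self-consistently in a time-H\"older space, and the crux is to prove that $t\mapsto\int_{B_1}x^{p}m^{\beta}_t\,dx$ is genuinely $\gamma/2$-H\"older, uniformly over the relevant inputs --- that is, to upgrade the easy $L^{\infty}$ control of $\beta$ to quantitative time-regularity and so close the circular dependence between ``$\beta$ H\"older'' and ``$\bar b$ H\"older''. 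Establishing this estimate, together with the continuous-dependence statement needed for (ii), is the technical heart of the argument; once it is in place, the Leray--Schauder theorem delivers the desired fixed point.
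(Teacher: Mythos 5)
Your scheme coincides with the paper's: freeze the mean--field input $\beta$, solve the linear Cauchy--Dirichlet equation \eqref{eq:fpk01} by Schauder theory, map $m^{\beta}$ to $\int_{B_1}x^{p}m^{\beta}_{t}\,dx$, and close the loop with Leray--Schauder. Your treatment of well--definedness, compactness and continuity is essentially the paper's Lemmas \ref{l:T1}, \ref{l:T2}, \ref{l:T} and \ref{l:cc01} (in fact, since $\partial_{t}m^{\beta}$ is bounded, the output is Lipschitz in time, which is exactly how the paper gets precompactness in $C^{1/2}$). But the step you explicitly defer as ``the technical heart'' --- a bound on the time--H\"older seminorm of candidate fixed points that does not route through the Schauder constant --- is precisely the paper's Lemma \ref{l:fpt02}, and omitting it is a genuine gap, not a technicality: the Schauder estimate of Lemma \ref{l:T1} only yields $[\beta]\le T^{1/2}K(|\beta|)$ with $K$ of unknown growth, so it cannot by itself produce the a priori bound that Theorem 11.2 of \cite{GT01} requires, and without that bound the whole argument stops.

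The missing idea is that this estimate is obtained \emph{probabilistically}, not analytically. By Proposition \ref{p:ke01}, the solution of the frozen equation with drift $b(x,\beta_{t})$ is the sub--probability density of the killed It\^o diffusion $dX_{t}=b(X_{t},\beta_{t})\,dt+dW_{t}$ with lifetime $\tau$; hence any solution of $\beta=\lambda\mathcal T\beta$ admits the representation $\beta_{t}=\lambda\,\mathbb E^{m_{0}}\bigl[X_{t}^{p}\,I_{[0,\tau)}(t)\bigr]$. Splitting the increment,
\begin{equation*}
|\beta(t_{1})-\beta(t_{2})|^{2}\;\le\;
2\lambda^{2}\,\bigl|\mathbb E^{m_{0}}\bigl[X_{t_{1}}^{p}\,I_{[t_{1},t_{2}]}(\tau)\bigr]\bigr|^{2}
+2\lambda^{2}\,\mathbb E^{m_{0}}\bigl[\,|X_{t_{1}}-X_{t_{2}}|^{2}\,\bigr],
\end{equation*}
the first term is controlled by the exit--time distribution estimate $\mathbb P(\tau\in[t_{1},t_{2}])\le K\,|t_{1}-t_{2}|^{1/2}$ ((D9) of \cite{FS06}) and the second by the elementary increment bound $\mathbb E|X_{t_{1}}-X_{t_{2}}|^{2}\le K(|b|_{0})\,|t_{1}-t_{2}|$ for an SDE with bounded drift. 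Both constants depend only on $|b|_{0}$, which is finite by (A1), and on nothing about $\beta$; this is what breaks the circularity between ``$\beta$ H\"older'' and ``$\bar b$ H\"older'' that you correctly identified. It also explains the paper's choice of fixed--point space: the probabilistic argument naturally produces a $1/2$-H\"older bound, so one works in $\mathcal B=C^{1/2}((0,T);\mathbb R^{d})$ rather than your $C^{\gamma/2}$, and then Proposition \ref{p:holder01} upgrades a $C^{1/2}$ input to a $C^{\gamma,\gamma/2}$ drift, matching exactly the exponent demanded by the Schauder theory under (A1).
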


\subsection{An example}
Next, we provide a special case of Theorem \ref{t:main}, where the statement of Theorem \ref{t:main} could be easily verified from the symmetry of the initial density directly using Fourier series. We relegate the proof for the general case to Section \ref{sec:3}.

We consider 1-d 
MV-SDE \eqref{eq:sde01} with the drift and initial density given by
\begin{equation}
 \label{eq:e01}
b(x, y) = y^{2}, \ m_{0}(x) = \kappa e^{1/(x^{2}-1)} I_{(-1, 1)} (x), 
\end{equation}
where
$\kappa$ is the normalization constant
$$\kappa = \Big(\int_{-1}^{1} e^{1/(x^{2}-1)} dx \Big)^{-1}.$$
It is noted that both $b$ and $m_{0}$ satisfy (A1).
We use the following $L^{2}(-1,1)$-orthogonal basis: 
for all natural number $n$
$$\eta_{n} (x) = \sin (n\pi(x+1)/2), \ \forall x\in (-1,1).$$
By \cite{str08}, the equation
\begin{equation}
 \label{eq:heat}
 \left\{ 
\begin{array}{ll}
 \partial_{t} m =\frac 1 2 \partial_{xx} m  \ &
 \hbox{ on }  (-1,1) \times (0, \infty); 
\\ m(x, 0) = m_{0}(x), \ & \hbox{ on }   [-1,1];
\\
m(\pm 1, t) = 0, \ & \hbox{ on }  (0, \infty)
\end{array}
\right.
\end{equation}
has the unique solution
$$m(x, t) = \sum_{n\in \mathbb N} (m_{0}, \eta_{n}) 
e^{-n^{2}\pi^{2}t/8} \eta_{n}.
$$
By \cite{Fel71}, the above function $m$ given by Fourier series 
is also the 
density function of a Brownian motion with initial distribution $m_{0}$  absorbed at $\{\pm 1\}$, i.e.
\begin{equation} \label{eq:e02}
d X_{t} =  dW_{t} \cdot I_{(-1,1)}(X_t);
 \quad X_{0} \sim m_{0}.
\end{equation}
If $n$ is even, then $(m_{0}, \eta_{n}) = 0$, since 
$m_{0}$ is even and $\eta_{n}$ is odd.
Therefore, $v$ is an even function of the form
\begin{equation}
 \label{eq:m01}
 m(x, t) = \sum_{n = odd} (m_{0}, \eta_{n}) 
e^{-n^{2}\pi^{2}t/8} \eta_{n}.
\end{equation}
Hence, if $p$ is odd, we have
$$\beta_{t} = \mathbb E[X^{p}_{t} I_{(-1,1)}(X_{t})] = 0, \forall t\ge 0.$$
Moreover, we observe that, due to the fact of 
$b(x, \beta_{t}) = b(x, 0) = 0$, \eqref{eq:heat} and \eqref{eq:e02} 
are equivalent to 
 \eqref{eq:ifpk01} and \eqref{eq:sde01}, respectively. 
 Thus we conclude that
\begin{itemize}
 \item With the setup \eqref{eq:e01}, if $p$ is an odd number, then 
 the density function of \eqref{eq:sde01} has an 
 explicit form \eqref{eq:m01}, and 
  solves \eqref{eq:ifpk01}.
\end{itemize}
We also observe that, the density $m(x, t)$ goes to zero function as $t\to 0$, and zero function is actually stationary distribution of such a process. It's not hard to see this example has the following extensions: 
%\begin{itemize}
%\item 
If there exists some $\gamma\in (0,1]$ such that, 
\begin{itemize}
\item $b\in C^{1	+\gamma}(B_1^2; \mathbb R^d)$ with $b(x, 0) =0$ for all $x$;
\item $m_0 \in C_0^{2+\gamma}(B_1)$ is an even function,
\end{itemize}
then the density function of \eqref{eq:sde01} 
  solves \eqref{eq:ifpk01}. Furthermore, $m$ has a representation via Fourier series, which goes to zero function as $t\to 0$.
%\end{itemize}

\section{Proof of the main result} \label{sec:3}
In this section, we will prove Theorem \ref{t:main}.
We outline the main proof of Theorem \ref{t:main} in Section \ref{s:3-1}
based on some estimation results, whose proof will be provided in Section \ref{s:3-2}. We will use $K$ for a generic constant, and 
$K(\alpha, \beta)$ indicates its dependence on $\alpha$ and $\beta$.

\subsection{Definition of the mapping $\mathcal T$ and the proof of Theorem \ref{t:main}} \label{s:3-1}
We fix an arbitrary $T>0$ and $\gamma \in (0,1)$. 
We also define Banach spaces $\mathcal B$ and $\mathcal R$ given by
 \begin{equation}
 \label{eq:calb01}
  \mathcal B = C^{1/2} ((0, T); \mathbb R^{d}).
\end{equation}
and 
\begin{equation}
 \label{eq:calr01}
 \mathcal R := C^{2 + \gamma, 1+ \frac \gamma 2} ( B_{1} \times (0, T); \mathbb R).
\end{equation}
For precise definitions of elliptic and parabolic H\"older spaces, we refer to 
Appendix \ref{sec:holder}.

To proceed, we introduce an operator $\mathcal T: \mathcal B \mapsto \mathcal B$ through the
composition $\mathcal T = \mathcal T_{1} \circ \mathcal T_{2}$, where
$\mathcal T_1: \mathcal B \mapsto \mathcal R$ and $\mathcal T_2: \mathcal R \mapsto \mathcal B$ 
are defined as follows:

\begin{enumerate}
 \item Define $\mathcal T_{1}: \beta \mapsto \mathcal T_{1}(\beta) := m$, where $m$ solves the following equation with a given process $\beta$,
 \begin{equation}
 \label{eq:fpk01}
 \left\{ 
\begin{array}{ll}
 \partial_{t} m =\frac 1 2 \Delta m - {\rm div}_{x} ( m \ b(x, \beta_{t})), \ &
 \hbox{ on } B_{1} \times  (0, T); 
\\ m(x, 0) = m_{0}(x), \ & \hbox{ on } \bar B_{1};
\\
m(x, t) = 0, \ & \hbox{ on } \partial B_{1} \times (0, T).
\end{array}
\right.
\end{equation}

\item Define $\mathcal T_{2} : m \mapsto \mathcal T_{2}(m)$, where 
\begin{equation}
 \label{eq:tau02}
\end{equation}
$$\mathcal T_{2} (m) = \int_{B_{1}} x^{p} m_{t}(x) dx.$$
\end{enumerate}

%\item We take $\mathcal B$ as 

\begin{proof}
 (of Theorem \ref{t:main})
 
By Lemma \ref{l:T}, 
$\mathcal T$ is a mapping from a Banach space $\mathcal B$ to itself.
Furthermore, the mapping
$\mathcal T: \mathcal B \mapsto \mathcal B$ has the following properties:
\begin{enumerate}
 \item $\mathcal T$ is a continuous compact mapping by Lemma \ref{l:cc01};
 \item $\{x\in \mathcal B : x = \lambda \mathcal T x, \ \lambda \in [0, 1]\}$ is 
 bounded in $\mathcal B$ Lemma \ref{l:fpt02}.
\end{enumerate}
By the
Leray-Schauder's fixed point theorem (FPT), see Theorem 11.2 of \cite{GT01}, 
the mapping $\mathcal T$
 has a fixed point in $\mathcal B$, i.e.
there exists $\hat \beta \in \mathcal B$, such that
$$\mathcal T \hat \beta = \hat \beta.$$
If we set $\hat m = \mathcal T_{1} \hat \beta$, then 
the pair $(\hat m, \hat \beta)$ solves FPK \eqref{eq:ifpk01} by its definition of $\mathcal T = \mathcal T_{1} \circ \mathcal T_{2}$ in \eqref{eq:fpk01} -\eqref{eq:tau02}.
Also due to Proposition \ref{p:ke01}, since $\hat m_{t}$ is the unique solution of \eqref{eq:fpk01}, 
it is the density of the unique solution $\hat X_{t}$
of \eqref{eq:sde03} with given $\beta = \hat \beta$. Together 
with the definition of $\mathcal T_{2}$ given by \eqref{eq:tau02} , the process $\hat \beta_{t}$ 
satisfies  \eqref{eq:beta01}. Therefore, the pair $(\hat X, \hat \beta)$ solves
\eqref{eq:sde03}-\eqref{eq:beta01}, and this gives the solvability of 
 \eqref{eq:sde01} in the sense of Definition \ref{eq:d01} .

\end{proof}

\subsection{Estimates for the mapping $\mathcal T$} \label{s:3-2}
In the proof of Theorem \ref{t:main}, we have used 
 Lemma \ref{l:T}, \ref{l:cc01}, \ref{l:fpt02} concerning the mapping
  $\mathcal T$, and we will present their proofs in this section separately.
First,
we shall verify that $\mathcal T$ is a well defined mapping. This includes
\begin{enumerate}
 \item the unique solvability of FPK \eqref{eq:fpk01}; 
 \item 
 Justification of the set $\mathcal R$ satisfying 
 $\mathcal T_{1}(\mathcal B) 
 \subset \mathcal R \subset \mathcal T_{2}^{-1}(\mathcal B)$.
\end{enumerate} 
 
\begin{lemma}
\label{l:T1}
 $\mathcal T_{1}: \mathcal B \mapsto \mathcal R$ is a well defined mapping
with estimates 
$$|\mathcal T_{1}(\beta)|_{2 + \gamma, 1 + \gamma/2} \le K(|\beta|_{1/2}) |m_{0}|_{2 + \gamma}.$$
\end{lemma}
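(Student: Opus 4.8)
The plan is to read \eqref{eq:fpk01} as a linear, uniformly parabolic Cauchy--Dirichlet problem with frozen lower-order coefficients and to invoke the classical Schauder theory for such problems, in the form recorded in \cite{Kry96}. Expanding the divergence,
$${\rm div}_{x}(m\, b(x,\beta_{t})) = \sum_{i=1}^{d} b^{i}(x,\beta_{t})\, \partial_{x_{i}} m + m \sum_{i=1}^{d} \partial_{x_{i}} b^{i}(x,\beta_{t}),$$
so that \eqref{eq:fpk01} takes the non-divergence form
$$\partial_{t} m = \tfrac12 \Delta m - \sum_{i=1}^{d} b^{i}(x,\beta_{t})\, \partial_{x_{i}} m - \Big(\sum_{i=1}^{d}\partial_{x_{i}} b^{i}(x,\beta_{t})\Big) m,$$
with initial data $m_{0}$ on $\bar B_{1}$ and zero lateral Dirichlet data on $\partial B_{1}\times(0,T)$. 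The principal part $\tfrac12\Delta$ has constant coefficients and is uniformly elliptic, and $\partial B_{1}$ is smooth, so the only inputs I must control are the first- and zeroth-order coefficients $b^{i}(x,\beta_{t})$ and $\partial_{x_{i}}b^{i}(x,\beta_{t})$.

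The crux is to show these coefficients belong to the parabolic H\"older class $C^{\gamma,\gamma/2}(B_{1}\times(0,T))$ with norms governed by $|\beta|_{1/2}$. In the spatial variable this is immediate: by (A1), $b\in C^{1+\gamma}(B_{1}^{2})$, so $b^{i}(\cdot,y)\in C^{\gamma}$ and $\partial_{x_{i}}b^{i}(\cdot,y)\in C^{\gamma}$ uniformly in $y$. For the time variable I would use the composition with $\beta$: since $b^{i}$ is Lipschitz in its second argument and $\beta\in C^{1/2}((0,T))$, the map $t\mapsto b^{i}(x,\beta_{t})$ is $1/2$-H\"older in $t$, hence also $\gamma/2$-H\"older because $\gamma\le 1$. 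For the zeroth-order term the matching is exact and is the key point: $\partial_{x_{i}}b^{i}$ is $\gamma$-H\"older in $y$ by (A1), so termwise
$$|\partial_{x_{i}}b^{i}(x,\beta_{s}) - \partial_{x_{i}}b^{i}(x,\beta_{t})| \le K\,|\beta_{s}-\beta_{t}|^{\gamma} \le K\,|\beta|_{1/2}^{\gamma}\,|s-t|^{\gamma/2},$$
so the time-H\"older seminorm of the zeroth-order coefficient scales like $|\beta|_{1/2}^{\gamma}$. Thus both lower-order coefficients lie in $C^{\gamma,\gamma/2}$ with norm bounded by a constant depending only on $|b|_{1+\gamma}$ and $|\beta|_{1/2}$.

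With the coefficients in the right class, I would apply existence, uniqueness, and the Schauder estimate for linear parabolic Cauchy--Dirichlet problems. Uniqueness, which makes $\mathcal T_{1}$ single-valued, follows from the maximum principle for the homogeneous problem. The compatibility conditions at the corner $\partial B_{1}\times\{0\}$ needed for a solution of class $C^{2+\gamma,1+\gamma/2}$ are automatic here: since $m_{0}\in C_{0}^{2+\gamma}(B_{1})$ vanishes together with its derivatives near $\partial B_{1}$, both the zeroth-order compatibility $m_{0}=0$ and the first-order compatibility $\tfrac12\Delta m_{0}-{\rm div}_{x}(m_{0}\, b(\cdot,\beta_{0}))=0$ hold on $\partial B_{1}$. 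The theory then yields a unique $m=\mathcal T_{1}(\beta)\in\mathcal R$, establishing well-definedness, together with
$$|m|_{2+\gamma,1+\gamma/2} \le K\big(|f|_{\gamma,\gamma/2} + |m_{0}|_{2+\gamma}\big),$$
where the source $f\equiv 0$ and the lateral data vanish, and the constant $K$ depends on the ellipticity, the domain, $T$, and the $C^{\gamma,\gamma/2}$ norms of the coefficients. Since the latter are controlled by $|\beta|_{1/2}$ by the previous paragraph, this reduces to the claimed bound $|\mathcal T_{1}(\beta)|_{2+\gamma,1+\gamma/2}\le K(|\beta|_{1/2})\,|m_{0}|_{2+\gamma}$.

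I expect the main obstacle to be the coefficient bookkeeping of the second paragraph---precisely, verifying the $\gamma/2$ time-H\"older regularity of the zeroth-order coefficient $\sum_{i}\partial_{x_{i}}b^{i}(x,\beta_{t})$ and extracting the explicit dependence of the Schauder constant on $|\beta|_{1/2}$---since existence, uniqueness, and the interior and boundary estimates then follow as a direct citation of standard linear parabolic theory once the coefficients and the compatibility conditions are in place.
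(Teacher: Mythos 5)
Your proposal is correct and follows the same skeleton as the paper's proof: rewrite \eqref{eq:fpk01} in non-divergence form, verify that the first- and zeroth-order coefficients $b^{i}(x,\beta_{t})$ and $\partial_{x_i}b^{i}(x,\beta_t)$ lie in $C^{\gamma,\gamma/2}$ with norms controlled by $|b|_{1+\gamma}$ and $|\beta|_{1/2}$ (your composition estimate is exactly the paper's Proposition \ref{p:holder01}), and then cite linear parabolic Schauder theory for existence, uniqueness, and the bound. The one substantive divergence is in the last step: the estimate the paper has recorded from \cite{Kry96} (Proposition \ref{p:ke02}, based on Theorem 10.2.2 there) requires the zeroth-order coefficient to be nonpositive, and $-\sum_i\partial_{x_i}b^{i}(x,\beta_t)$ has no sign in general; the paper therefore first rescales $\bar m(x,t)=e^{-\lambda t}m(x,t)$ with $\lambda=|b|_{1+\gamma}$, applies Proposition \ref{p:ke02} to the equation for $\bar m$ (whose zeroth-order coefficient is now nonpositive), and transfers the bound back to $m$ at the cost of a factor $e^{\lambda T}$. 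You instead invoke a sign-free version of the Schauder estimate whose constant depends on $T$ and the coefficient norms; such versions exist in the classical literature, so your argument is sound, but if one insists on using only the estimate as stated in the paper's appendix, the exponential-weighting step is the missing ingredient. On the other hand, your explicit verification of the corner compatibility conditions (automatic since $m_0\in C_0^{2+\gamma}(B_1)$) is a point the paper passes over silently, and it is a genuine prerequisite for $C^{2+\gamma,1+\gamma/2}$ regularity up to $\partial B_1\times\{0\}$.
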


\begin{proof}
 Rewrite the FPK \eqref{eq:fpk01} into non-divergence form
 $$\partial_{t} m = \frac 1 2 \Delta m - b^{\beta} \circ \nabla m - 
 m \  {\rm div}_{x} (b^{\beta}),$$
 where 
 $$b^{\beta}( x, t) = b( x, \beta(t)).$$
 Note that 
\begin{itemize}
 \item We have $b^{\beta}\in  C^{1.0, \frac 1 2}(B_{1} \times (0, T))$ 
 by the application of 
 Proposition \ref{p:holder01} with (A1) and \eqref{eq:calb01}; Moreover, 
 $$|-b^{\beta}|_{1.0, 1/2} = |b|_{0} + [b^{\beta}]_{1.0, 1/2} \le K |b|_{1.0}( |\beta|_{1/2} + 1).$$
 \item We also have ${\rm div}_{x}(b^{\beta}) \in C^{\gamma, \frac \gamma 2} ( B_{1} \times (0, T))$. Indeed, one can write 
 $${\rm div}_{x} (b^{\beta} (x, t)) = \sum_{i = 1}^{d} \partial_{x_{i}} b^{(i)} (x, \beta_{t})$$
 and use  
 Proposition \ref{p:holder01} once again 
 and the fact that 
 $\partial_{x_{i}} b^{(i)} \in C^{\gamma}(B_{1}^{2})$, which yields
 $$|{\rm div}_{x}(b^{\beta})|_{\gamma, \gamma/2} \le 
 \sum_{i=1}^{d} |\partial_{x_{i}}b^{(i)} (x, \beta_{t})|_{0} + 
 \sum_{i=1}^{d} [\partial_{x_{i}} b^{(i)} (x, \beta_{t})]_{\gamma, \gamma/2} 
 \le K |b|_{1+\gamma} (|\beta|_{1/2} +1).$$
\end{itemize}

Moreover, if we define $m_{0}^{T} (x, t) = m_{0}(x)$, then $m_{0}^{T} \in C^{1 + \frac \gamma 2, 2 + \gamma}( B_{1}  \times (0, T) )$. Therefore, 
by Theorem 10.3.3 of [2], there exists unique solution $m \in C^{ 2 + \gamma, 1 + \frac \gamma 2}( B_{1} \times (0, T)) = \mathcal R$ for \eqref{eq:fpk01}.

If we set $\bar m (x, t) = e^{-\lambda t} m(x, t)$ with $\lambda = |b|_{1+ \gamma}$, then $ {\rm div}_{x}(b^{\beta}) + \lambda \le 0$ and 
$\bar m$ solves 
$$
 \left\{ 
\begin{array}{ll}
 \partial_{t} \bar m = \frac 1 2 \Delta \bar m - b^{\beta} \circ \nabla \bar m - 
 \bar m \  ({\rm div} (b^{\beta}) + \lambda), \ &
 \hbox{ on } B_{1} \times (0, T) ; 
\\ \bar m(x, 0) = m_{0}(x), \ & \hbox{ on } \bar B_{1};
\\
\bar m(x, t) = 0, \ & \hbox{ on }  \partial B_{1} \times (0, T).
\end{array}
\right.
$$
Now we can invoke the estimation from Proposition \ref{p:ke02} to obtain
$$|\bar m|_{2 + \gamma, 1 + \gamma/2} \le K(|b^{\beta}|_{\gamma, \gamma/2}, |{\rm div}_{x}(b^{\beta}) + \lambda|_{\gamma, \gamma/2}) |m_{0}|_{2 + \gamma}$$
which is equivalent to
$$|m|_{ 2 + \gamma, 1 + \gamma/2} \le K(|b|_{1+\gamma}, |\beta|_{1/2}) |m_{0}|_{2 + \gamma}.$$
\end{proof}

Estimation of $\mathcal T_{2}$ directly follows from its definition.
The H\"older space $C^{1.0}((0, T); \mathbb R^{d})$ used below 
is indeed the space of
Lipschitz continuous functions, see the remark on $C^{1.0}$ and $C^{1}$
in Appendix \ref{sec:holder}.
\begin{lemma}
 \label{l:T2}
 $\mathcal T_{2} : \mathcal R \mapsto C^{1.0}((0, T); \mathbb R^{d})$
is well defined with an estimate
$$|\mathcal T_{2}(m)|_{1.0} \le K |m|_{2 + \gamma, 1+ \frac \gamma 2}.$$
%$\Box$
\end{lemma}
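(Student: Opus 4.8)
The plan is to prove the estimate by a direct computation, treating the two pieces that make up the Lipschitz norm $|\cdot|_{1.0}$ separately: the supremum $|\mathcal T_{2}(m)|_{0}$ and the Lipschitz seminorm $[\mathcal T_{2}(m)]_{1.0}$. Since $\mathcal T_{2}(m)_{t} = \int_{B_{1}} x^{p} m_{t}(x)\,dx$ is an explicit linear functional of $m$ integrated against a fixed weight over the fixed bounded domain $B_{1}$, the whole argument reduces to pulling the relevant pieces of the parabolic H\"older norm $|m|_{2+\gamma, 1+\gamma/2}$ out of the integral. First I would record the two elementary facts that make this possible: because $B_{1}$ is the unit ball, the weight $x^{p}$ is bounded on $B_{1}$, say $\sup_{x\in B_{1}}|x^{p}| \le 1$, and $|B_{1}| < \infty$. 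The supremum bound is then immediate: for each $t\in(0,T)$,
$$|\mathcal T_{2}(m)_{t}| = \Big|\int_{B_{1}} x^{p} m_{t}(x)\,dx\Big| \le \sup_{x\in B_{1}}|x^{p}| \cdot |B_{1}| \cdot |m|_{0} \le K\,|m|_{2+\gamma, 1+\gamma/2},$$
using that the plain sup-norm $|m|_{0}$ is dominated by the full parabolic H\"older norm.

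For the Lipschitz seminorm, the key point is that the temporal index $1+\gamma/2$ of the space $\mathcal R = C^{2+\gamma, 1+\gamma/2}$ guarantees that $m$ is continuously differentiable in $t$ with $\partial_{t} m$ bounded (indeed $\gamma/2$-H\"older) in time, so that $\sup_{x,\tau}|\partial_{\tau} m(x,\tau)| \le |m|_{2+\gamma, 1+\gamma/2}$ and $m$ is Lipschitz in $t$ uniformly in $x$. I would therefore estimate, for any $s,t\in(0,T)$,
$$|\mathcal T_{2}(m)_{t} - \mathcal T_{2}(m)_{s}| = \Big|\int_{B_{1}} x^{p}\,(m_{t}(x) - m_{s}(x))\,dx\Big| \le \sup_{x\in B_{1}}|x^{p}| \cdot |B_{1}| \cdot \sup_{x,\tau}|\partial_{\tau} m| \cdot |t-s|,$$
whence $[\mathcal T_{2}(m)]_{1.0} \le K\,|m|_{2+\gamma, 1+\gamma/2}$. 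Combining the two bounds yields $|\mathcal T_{2}(m)|_{1.0} = |\mathcal T_{2}(m)|_{0} + [\mathcal T_{2}(m)]_{1.0} \le K\,|m|_{2+\gamma, 1+\gamma/2}$, which simultaneously shows that $\mathcal T_{2}$ is well defined as a map into $C^{1.0}((0,T);\mathbb R^{d})$ (the integral is finite and Lipschitz in $t$, hence lands in the claimed space) and furnishes the asserted estimate.

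The computation itself is routine, so I do not expect a genuine obstacle; the only point requiring care is the bookkeeping of the parabolic H\"older norm. I would make explicit that the temporal exponent $1+\gamma/2 > 1$ is precisely what delivers a \emph{bounded} time-derivative of $m$, so that integrating against the fixed weight $x^{p}$ produces a genuinely Lipschitz (in fact $C^{1}$) function of $t$. This is where the regularity gain from $\mathcal R$ down to $C^{1.0}$ originates, and it is the only step that is more than mechanical.
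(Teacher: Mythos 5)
Your proposal is correct and follows essentially the same route as the paper: both arguments reduce the Lipschitz bound for $\mathcal T_{2}(m)$ to the fact that $|\partial_{t} m|_{0} \le |m|_{2+\gamma,\,1+\gamma/2}$ (the paper via $\int_{t_{1}}^{t_{2}}|\partial_{t}m|\,ds$, you via the mean value bound $\sup|\partial_{t}m|\cdot|t-s|$), then integrate the bounded weight $x^{p}$ over $B_{1}$. Your explicit treatment of the sup-norm piece $|\mathcal T_{2}(m)|_{0}$ is a small addition the paper leaves implicit; the only nitpick is that your parenthetical claim that $\partial_{t}m$ is itself $\gamma/2$-H\"older in time is not needed and does not literally follow from the paper's stated norm, which only controls $|\partial_{t}m|_{0}$ and the H\"older seminorm of top-order derivatives.
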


\begin{proof}
To prove Lipschitz continuity of the process $\mathcal T_{2}(m)$, we shall 
show 
$$|\mathcal T_{2}(m)(t_{1}) - \mathcal T_{2}(m)(t_{2})| 
\le K |t_{1} - t_{2}|, \ \forall 0 \le t_{1} < t_{2} \le T.$$
This follows from the following estimate for the mapping $\mathcal T_2$ of $m\in \mathcal R$
and $0 < t_{1} \le t_{2} < T$:
$$
\begin{array}
 {ll}
|\mathcal T_{2}(m)(t_{1}) - \mathcal T_{2}(m)(t_{2})| 
& \displaystyle
\le \int_{B_{1}} |x|^{p} |m(x, t_{1}) - m(x, t_{2})| dx
\\ & \displaystyle
= 
\int_{B_{1}} |x|^{p} \cdot \int_{t_{1}}^{t_{2}} |\partial_{t} m| (x, s) ds dx 
\\ & \displaystyle
\le \int_{B_{1}} |x|^{p}  dx \  |m|_{2 + \gamma, 1+ \frac \gamma 2} \ |t_{2} - t_{1}|. 
\end{array}
$$
\end{proof}

Now we can have an estimation of $\mathcal T$ as a well-defined mapping.
\begin{lemma}
 \label{l:T}
 $\mathcal T: \mathcal B \mapsto C^{1.0}((0,T); \mathbb R^{d}) \subset \mathcal B$ is well defined with 
$$|\mathcal T (\beta)|_{1.0} \le K(|\beta|_{\frac 1 2 }) |m_{0}|_{2+\gamma}.$$
\end{lemma}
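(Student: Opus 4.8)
The plan is to obtain Lemma \ref{l:T} as an immediate consequence of the two estimates already established for the factors $\mathcal T_1$ and $\mathcal T_2$, supplemented by a Hölder embedding that guarantees the image lands back inside $\mathcal B$. Recall the functional action of $\mathcal T$: given $\beta \in \mathcal B$, one first solves FPK \eqref{eq:fpk01} to produce the density $m = \mathcal T_1(\beta) \in \mathcal R$, and then extracts the mean-field moment $\mathcal T_2(m)(t) = \int_{B_1} x^p m_t(x)\,dx$, so that $\mathcal T(\beta) = \mathcal T_2(\mathcal T_1(\beta))$.

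First I would invoke Lemma \ref{l:T1}, which asserts that $\mathcal T_1$ maps $\mathcal B$ into $\mathcal R$ with $|\mathcal T_1(\beta)|_{2+\gamma,1+\gamma/2} \le K(|\beta|_{1/2})\,|m_0|_{2+\gamma}$. Next I would apply Lemma \ref{l:T2} to the output $m = \mathcal T_1(\beta) \in \mathcal R$, obtaining $\mathcal T_2(m) \in C^{1.0}((0,T);\mathbb R^d)$ together with $|\mathcal T_2(m)|_{1.0} \le K\,|m|_{2+\gamma,1+\gamma/2}$. Chaining the two inequalities yields
$$|\mathcal T(\beta)|_{1.0} = |\mathcal T_2(\mathcal T_1(\beta))|_{1.0} \le K\,|\mathcal T_1(\beta)|_{2+\gamma,1+\gamma/2} \le K(|\beta|_{1/2})\,|m_0|_{2+\gamma},$$
which is exactly the claimed bound once the fixed constant from Lemma \ref{l:T2} is absorbed into $K(\cdot)$.

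To complete the well-definedness assertion $\mathcal T: \mathcal B \mapsto C^{1.0} \subset \mathcal B$, I would verify the inclusion $C^{1.0}((0,T);\mathbb R^d) \subset \mathcal B = C^{1/2}((0,T);\mathbb R^d)$. On the bounded interval $(0,T)$ any Lipschitz function $f$ satisfies $|f(t_1)-f(t_2)| \le [f]_{1.0}\,|t_1-t_2| \le T^{1/2}[f]_{1.0}\,|t_1-t_2|^{1/2}$, whence $[f]_{1/2} \le T^{1/2}[f]_{1.0}$ and therefore $|f|_{1/2} \le K(T)\,|f|_{1.0}$. This confirms that $\mathcal T$ indeed returns values in $\mathcal B$, so that the composition is meaningful and the subsequent Leray--Schauder argument is legitimate.

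There is essentially no genuine obstacle here: all the analytic weight has already been carried by Lemmas \ref{l:T1} and \ref{l:T2}, and the present lemma is pure bookkeeping. The only point demanding a moment's care is the embedding $C^{1.0}\hookrightarrow C^{1/2}$, which relies on the finiteness of $T$ (on an unbounded time interval a uniform Lipschitz-to-Hölder constant would fail); since $T$ was fixed arbitrarily at the outset of Section \ref{s:3-1}, this causes no difficulty.
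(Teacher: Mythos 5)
Your proposal is correct and follows exactly the paper's route: the paper's entire proof is ``It is a consequence of Lemma \ref{l:T1} and \ref{l:T2},'' i.e.\ chaining the two estimates just as you do. Your explicit verification of the embedding $C^{1.0}((0,T);\mathbb R^{d}) \subset C^{1/2}((0,T);\mathbb R^{d})$ on the bounded interval is a detail the paper leaves implicit (it is stated as a general fact about H\"older spaces before Lemma \ref{l:cc01}), and spelling it out is a welcome addition rather than a deviation.
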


\begin{proof}
 It is a consequence of Lemma \ref{l:T1} and \ref{l:T2}.
\end{proof}

So far, we established that the operator $\mathcal T$ is well defined from
its domain $\mathcal B$ to itself.
Next, we will prove key facts for the proof of the fixed point theorem: the continuity and the compactness of the operator $\mathcal T$.
For this purpose, we shall briefly recall the following imbedding properties
on H\"older spaces. Consider two H\"older spaces $C^{\gamma}$ and 
$C^{\lambda}$ for $\gamma > \lambda>0$.
Then, $C^{\gamma} \subset C^{\lambda}$ holds and any bounded subset of 
$C^{\gamma}$ is a compact subset of $C^{\lambda}$. Furthermore, 
if (1) $C^{\gamma} \ni \alpha_{n} \to \alpha$ pointwisely; and (2) 
$|\alpha_{n}|_{\gamma} < K$ for any $n\in \mathbb N$, then 
$\alpha_{n} \to \alpha$ in $C^{\lambda}$, 
i.e. $|\alpha_{n} - \alpha|_{\lambda} \to 0$ as $n\to \infty$. 
However, $\alpha_{n}\to \alpha$ in $C^{\gamma}$ (i.e. $|\alpha_{n} - \alpha|_{\gamma} \to 0$)
may not be true.

\begin{lemma}
 \label{l:cc01} $\mathcal T$ is continuous compact in $\mathcal B$.
\end{lemma}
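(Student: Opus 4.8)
The plan is to verify the two properties separately, using the smoothing bound of Lemma~\ref{l:T} for compactness and a subsequence-plus-uniqueness argument for continuity, and in both cases feeding on the imbedding facts recalled just above the lemma. Throughout I recall that $\mathcal T$ sends $\beta$ to $\mathcal T_2(m)$ with $m=\mathcal T_1(\beta)$, i.e. $\mathcal T(\beta)(t)=\int_{B_1}x^p m(x,t)\,dx$.

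For compactness I would take an arbitrary bounded set $S\subset\mathcal B$, say $|\beta|_{1/2}\le M$ for all $\beta\in S$. Since the bound in Lemma~\ref{l:T} depends on $\beta$ only through $|\beta|_{1/2}$ and stays bounded for $|\beta|_{1/2}\le M$, we get $|\mathcal T(\beta)|_{1.0}\le K(M)\,|m_0|_{2+\gamma}$ uniformly over $\beta\in S$; that is, $\mathcal T(S)$ is bounded in $C^{1.0}((0,T);\mathbb R^{d})$. Because $1>\tfrac12$, the imbedding property recalled above — any bounded subset of $C^{\gamma}$ is relatively compact in $C^{\lambda}$ for $\gamma>\lambda$ — shows that $\mathcal T(S)$ is precompact in $C^{1/2}=\mathcal B$. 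Hence $\mathcal T$ maps bounded sets to relatively compact sets.

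For continuity I would argue via subsequences. Suppose $\beta_n\to\beta$ in $\mathcal B$; in particular $\{\beta_n\}$ is bounded, so by Lemma~\ref{l:T1} the solutions $m_n:=\mathcal T_1(\beta_n)$ are uniformly bounded in $\mathcal R=C^{2+\gamma,1+\gamma/2}$. Fix $\gamma'\in(0,\gamma)$. By the compact imbedding of $C^{2+\gamma,1+\gamma/2}$ into $C^{2+\gamma',1+\gamma'/2}$, every subsequence of $\{m_n\}$ admits a further subsequence converging in $C^{2+\gamma',1+\gamma'/2}$ to some $\tilde m$. Since $b\in C^{1+\gamma}(B_1^2)$ is Lipschitz in its second argument and $\beta_n\to\beta$ uniformly, the coefficients $b(x,\beta_n(t))$ and ${\rm div}_x\,b(x,\beta_n(t))$ converge uniformly to $b(x,\beta(t))$ and ${\rm div}_x\,b(x,\beta(t))$; as convergence in $C^{2+\gamma',1+\gamma'/2}$ carries the second spatial derivatives and the first time derivative, I can pass to the limit in the non-divergence form of \eqref{eq:fpk01} and conclude that $\tilde m$ is a classical solution of \eqref{eq:fpk01} with drift $b(x,\beta(t))$. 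By the uniqueness asserted in Lemma~\ref{l:T1}, $\tilde m=\mathcal T_1(\beta)=m$. As the limit is independent of the chosen subsequence, the full sequence $m_n\to m$ in $C^{2+\gamma',1+\gamma'/2}$, hence uniformly on $B_1\times(0,T)$.

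It remains to transfer this to $\mathcal T$. For each $t$, uniform convergence $m_n\to m$ yields $\mathcal T(\beta_n)(t)=\int_{B_1}x^p m_n(x,t)\,dx\to\int_{B_1}x^p m(x,t)\,dx=\mathcal T(\beta)(t)$, so $\mathcal T(\beta_n)\to\mathcal T(\beta)$ pointwise; at the same time Lemma~\ref{l:T} bounds $|\mathcal T(\beta_n)|_{1.0}$ uniformly. The third imbedding fact recalled above — pointwise convergence together with a uniform higher-H\"older bound forces convergence in the lower norm — then gives $|\mathcal T(\beta_n)-\mathcal T(\beta)|_{1/2}\to0$, i.e. convergence in $\mathcal B$. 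I expect the main obstacle to be the middle step: extracting a subsequence of the $m_n$ in a topology strong enough (carrying two spatial and one time derivative) to pass rigorously to the limit inside the PDE, and then pinning the limit down through the uniqueness of Lemma~\ref{l:T1}; by comparison the pointwise-to-H\"older upgrade and the compactness claim are routine consequences of Lemma~\ref{l:T}.
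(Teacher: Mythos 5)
Your proof is correct, and its skeleton matches the paper's: compactness via the $C^{1.0}$ bound of Lemma~\ref{l:T} plus the compact imbedding $C^{1.0}\subset\subset C^{1/2}$, and continuity by first getting convergence of the densities $m_n$, then transferring to $\mathcal T(\beta_n)$ through pointwise convergence combined with the uniform $C^{1.0}$ bound and the pointwise-plus-bounded imbedding fact. Where you genuinely diverge is the middle step, which is the heart of the continuity claim: the paper disposes of the convergence $m_n\to m$ in one line by invoking ``stability of the viscosity solution,'' whereas you run a classical subsequence--compactness--uniqueness argument: extract from the uniform $C^{2+\gamma,1+\gamma/2}$ bound a subsequence converging in $C^{2+\gamma',1+\gamma'/2}$ ($\gamma'<\gamma$), pass to the limit in the non-divergence form of \eqref{eq:fpk01} using uniform convergence of the coefficients $b(x,\beta_n(t))$ and ${\rm div}_x b(x,\beta_n(t))$, identify the limit via the uniqueness underlying Lemma~\ref{l:T1}, and conclude full-sequence convergence since every subsequential limit coincides. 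Your route is more self-contained (it needs no viscosity theory, only the Hölder imbeddings and the uniqueness already established for \eqref{eq:fpk01}) and it actually supplies the subsequence extraction and limit identification that the paper's citation leaves implicit; it also yields uniform rather than merely pointwise convergence of $m_n$, so the bounded convergence theorem the paper uses becomes unnecessary. The paper's version is shorter but leans on machinery external to the rest of the argument. Two small points you may wish to note: the limit $\tilde m$ inherits membership in $C^{2+\gamma,1+\gamma/2}$ by lower semicontinuity of the norm under pointwise convergence (or one can simply invoke uniqueness among classical solutions, which is what Theorem 10.3.3 of Krylov provides), and the initial and boundary conditions pass to the limit by uniform convergence — both routine, but worth a sentence in a final write-up.
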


\begin{proof}
 Lemma \ref{l:T} implies that any sequence $\{\beta^{n}\}$ bounded in 
 $\mathcal B = C^{1/2}((0,T); \mathbb R^{d})$ maps to a sequence $\{\mathcal T(\beta^{n})\}$ bounded in $C^{1.0}((0,T); \mathbb R^{d})$, 
 which is precompact in $C^{1/2}((0,T); \mathbb R^{d})$. 
 Thus, $\mathcal T$ is compact.
 
 In this below, we will establish the continuity of the mapping $\mathcal T$. If $\beta^{n} \to \beta^{\infty}$ in 
 $\mathcal B$, we denote, for simplicity
 $$m^{n} = \mathcal T_{1} (\beta^{n}), \ \alpha^{n} = \mathcal T_{2} (m^{n}), \  \forall n \in \mathbb N \cup \{\infty\}.$$
 Our objective is to show $\alpha^{n} \to \alpha^{\infty}$ in $\mathcal B$.
 Since 
 $\{m^{n}:  n\in \mathbb N\}$ is bounded in 
 $C^{2 + \gamma, 1+ \frac \gamma 2} ( B_{1} \times (0, T); \mathbb R)$ 
 by Lemma \ref{l:T1}, the pointwise convergence of $m^{n}$ as $n\to \infty$ holds from stability of the viscosity solution, i.e.
 $$m^{n}(x, t) \to m^{\infty}(x, t), \ \forall (x, t) \in B_{1} \times (0, T).$$
 Since $\{m^{n}:  n\in \mathbb N\}$ is bounded in 
 $C^{2 + \gamma, 1+ \frac \gamma 2} ( B_{1} \times (0, T); \mathbb R)$ by Lemma \ref{l:T1}, 
 one can use bounded convergence theorem to obtain, 
 $$\alpha^{n}(t) - \alpha^{\infty}(t) = \int_{B_{1}} x^{p} \cdot (m^{n}(x, t) - m^{\infty}(x, t)) dx \to 0, \quad \forall t\in (0, T).$$
 Thus, $\alpha^{n}$  converges $ \alpha^{\infty}$ pointwisely. But we know
 $\{\alpha^{n}: n\in \mathbb N\}$ are bounded in $C^{1.0}((0, T); \mathbb R^{d})$ by Lemma \ref{l:T}. Pointwise convergence and boundedness in $C^{1.0}((0,T); \mathbb R^{d})$ implies convergence in $C^{1/2}((0,T); \mathbb R^{d})$, i.e. $\alpha^{n} \to \alpha^{\infty}$ in $\mathcal B$.
 
\end{proof}

The following is the last piece required to  complete the FPT.

\begin{lemma}
\label{l:fpt02} 
The set
$\{\beta: \beta = \lambda \mathcal T \beta, \lambda \in [0, 1]\}$ is bounded in $\mathcal B$.
\end{lemma}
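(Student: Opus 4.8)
The plan is to produce a bound on $|\beta|_{1/2}$ that is uniform over $\lambda\in[0,1]$ and over all solutions of $\beta=\lambda\mathcal T\beta$, in two stages: first a cheap, $\lambda$-independent bound on the sup-norm coming from conservation of mass, and then an upgrade to the full $C^{1/2}$-norm. For $\lambda=0$ the only solution is $\beta\equiv0$, so fix $\lambda\in(0,1]$ and write $m=\mathcal T_{1}(\beta)$, so that the identity $\beta=\lambda\mathcal T\beta$ together with the definitions of $\mathcal T_{1},\mathcal T_{2}$ reads
$$\beta_{t}=\lambda\int_{B_{1}}x^{p}\,m_{t}(x)\,dx,\qquad t\in(0,T),$$
where $m$ solves the Dirichlet FPK \eqref{eq:fpk01} with drift $b(\cdot,\beta_{\cdot})$.

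First I would establish the sup-norm bound. Since $m_{0}\ge0$ and the boundary datum vanishes, the weak maximum principle for \eqref{eq:fpk01} gives $m_{t}\ge0$ on $B_{1}$. Integrating the equation over $B_{1}$ and using the divergence theorem with $m_{t}|_{\partial B_{1}}=0$ shows that $t\mapsto\int_{B_{1}}m_{t}\,dx$ is non-increasing: the only surviving boundary flux is $\tfrac12\int_{\partial B_{1}}\partial_{n}m\,dS\le0$, the mass lost to killing. Hence $\int_{B_{1}}m_{t}\,dx\le\int_{B_{1}}m_{0}\,dx=1$ for all $t$. Because $|x^{p}|\le|x|\le1$ on $B_{1}$, this yields
$$|\beta_{t}|\le\lambda\int_{B_{1}}|x^{p}|\,m_{t}\,dx\le\lambda\int_{B_{1}}m_{t}\,dx\le\lambda\le1,$$
so that $|\beta|_{0}\le1$ independently of $\lambda$ and of the particular solution.

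The second and harder stage turns this into a bound on the H\"older seminorm. The naive route---feeding $|\beta|_{0}\le1$ into Lemmas \ref{l:T} and \ref{l:T1}---is circular, since the Schauder constant $K(|\beta|_{1/2})$ there already involves the very quantity $|\beta|_{1/2}$ we want to bound (the time-modulus of the coefficient $b(\cdot,\beta_{\cdot})$ is controlled only by $|\beta|_{1/2}$). The point I would exploit is that the sup-norm bound alone already freezes the coefficients of \eqref{eq:fpk01} in $L^{\infty}$: by (A1) and $|\beta|_{0}\le1$, both $b(\cdot,\beta_{\cdot})$ and ${\rm div}_{x}b(\cdot,\beta_{\cdot})$ are bounded in $L^{\infty}(B_{1}\times(0,T))$ by a constant depending only on $|b|_{1+\gamma}$, and the maximum principle likewise bounds $\|m\|_{L^{\infty}}$ by $e^{KT}|m_{0}|_{0}$. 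With all coefficients and $m$ controlled in $L^{\infty}$ by $\lambda$-independent constants, I would invoke a parabolic $L^{q}$ maximal-regularity estimate for \eqref{eq:fpk01}---whose constant depends on the ellipticity, the $L^{\infty}$-bounds of the coefficients, the domain and $q$, but not on any time-modulus of continuity of the coefficients---to bound $\|\partial_{t}m\|_{L^{q}(B_{1}\times(0,T))}$ by a $\lambda$-independent constant. Then for $0\le s<t\le T$,
$$|\beta_{t}-\beta_{s}|\le\lambda\int_{B_{1}}|x^{p}|\int_{s}^{t}|\partial_{\tau}m|\,d\tau\,dx\le K\,\Big(\int_{B_{1}\times(s,t)}|\partial_{\tau}m|^{q}\Big)^{1/q}|t-s|^{1-1/q}\le K\,|t-s|^{1-1/q},$$
so choosing $q>2$ gives a uniform H\"older-$\tfrac12$ bound, hence $|\beta|_{1/2}\le M$ with $M=M(T,|b|_{1+\gamma},|m_{0}|_{2+\gamma},d,p)$ independent of $\lambda$, as required.

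The main obstacle is exactly this decoupling in the second stage: breaking the circular dependence between the time-regularity of $m$ and that of $\beta$, and in particular handling the boundary killing flux $\tfrac12\int_{\partial B_{1}}\partial_{n}m\,dS$, which is only integrable-in-time a priori and obstructs a naive Lipschitz estimate obtained by integrating the equation directly. Using an estimate whose constant is insensitive to the time-modulus of the coefficients (such as $L^{q}$ maximal regularity, or equivalently the Duhamel representation against the Dirichlet heat semigroup) is what makes the argument non-circular, while the compatibility built into $m_{0}\in C_{0}^{2+\gamma}(B_{1})$---namely that $m_{0}$ vanishes near $\partial B_{1}$---is what secures the requisite regularity up to $t=0$.
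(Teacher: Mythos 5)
Your proposal is correct in substance, but it follows a genuinely different route from the paper's proof. The two arguments coincide in the first stage (the bound $|\beta|_{0}\le 1$ from positivity and mass decrease) and in the key structural insight: the H\"older-in-time bound must come with constants depending only on sup-norm data ($|b|_{0}$, $|m_{0}|$) and never on the time-modulus of $\beta$, since otherwise one runs in a circle with the Schauder constant $K(|\beta|_{1/2})$ of Lemma \ref{l:T1}. Where you diverge is in how this decoupling is achieved. The paper stays probabilistic: by Proposition \ref{p:ke01} it represents $\beta(t)=\lambda\,\mathbb{E}^{m_{0}}\big[|X_{t}|^{p}I_{[0,\tau)}(t)\big]$ for the diffusion $X$ with frozen drift $b(\cdot,\beta_{\cdot})$ killed at the exit time $\tau$, splits the increment $|\beta(t_{1})-\beta(t_{2})|^{2}$ according to whether $\tau$ lands in $[t_{1},t_{2}]$ or after $t_{2}$, and controls the two pieces by an exit-time probability estimate (cited as (D9) of \cite{FS06}) and the elementary mean-square bound $\mathbb{E}|X_{t_{1}}-X_{t_{2}}|^{2}\le K(|b|_{0})|t_{1}-t_{2}|$; both constants depend only on $|b|_{0}$, exactly as required. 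You instead stay entirely on the PDE side, invoking parabolic $L^{q}$ maximal regularity for \eqref{eq:fpk01} (legitimate here, since the leading term is the constant-coefficient Laplacian and the lower-order coefficients are controlled in $L^{\infty}$ by (A1) together with $|\beta|_{0}\le 1$), bounding $\|\partial_{t}m\|_{L^{q}}$ uniformly in $\lambda$, and then extracting $[\beta]_{1/2}$ by H\"older's inequality with $q>2$. Both arguments are sound. The paper's is shorter because the stochastic representation has already been established and no $L^{q}$ theory is needed, though it leans on an exit-time estimate borrowed from \cite{FS06}; yours is self-contained on the analytic side, avoids probability altogether, and makes the non-circularity mechanism (insensitivity of the a priori estimate to the coefficients' time-modulus) completely explicit, at the price of importing Krylov/Ladyzhenskaya--Solonnikov--Uraltseva-type $W^{2,1}_{q}$ theory and checking the boundary compatibility of the initial datum, which $m_{0}\in C_{0}^{2+\gamma}(B_{1})$ indeed supplies.
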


\begin{proof}
 We shall show that, there exists $M>0$, s.t. if $\beta$ solves  \eqref{eq:mvfpk01} below for some $\lambda \in [0, 1]$, then $|\beta|_{C^{1/2}((0, T); \mathbb R^{d})} < M$.
 
\begin{equation}
 \label{eq:mvfpk01}
 \left\{
\begin{array}
 {ll}
 \partial_{t} m = \frac 1 2 \Delta m - {\rm div}_x (b(x, \beta_{t}) m) , & B_{1} \times (0, T)\\
 m(x, 0) = m_{0}(x), & x\in  \bar B_{1} \\
 m(x, t) = 0, & \partial B_{1} \times (0, T)\\
 \beta_{t} = \lambda \int_{B_{1}} x^{p} m(x, t) dx & t\in (0, T).
\end{array}
\right.
\end{equation}
If $\lambda = 0$, then $\beta = 0$, then the conclusion trivially holds.
If $\lambda \in (0, 1]$, we first have
$$|\beta|_{0}  \le \sup_{t\in (0, T)} \lambda \int_{B_{1}} |x|^{p} m(x, t) dx 
\le \sup_{t\in (0, T)} \int_{B_{1}}  m(x, t) dx \le 1.
$$
So, it's enough to show the boundedness of $[\beta]_{1/2}$. To proceed, 
we write the stochastic representation by Proposition \ref{p:ke01}
as follows.
$$\left\{
\begin{array}
 {ll}
 d X_{t} = b(X_{t}, \beta_{t}) dt + dW_{t}, 
 \\
 \tau = \inf \Big\{ t>0: X_{t} \notin B_{1} \Big\}
 \\
 \beta(t) = \lambda \mathbb E^{m_{0}} \Big[ |X_{t}|^{p} 
 I_{[0, \tau)}(t) \Big].
\end{array}
\right.
$$
Without loss of generality, 
we set $0 < t_{1} < t_{2} < T$. Then, we have
$$ 
\begin{array}
 {ll}
 |\beta(t_{1}) - \beta(t_{2})|^{2} 
 & = \displaystyle
 \lambda^{2} \Big |\mathbb E^{m_{0}} [X_{t_{1}} 
 %I_{[t_{1}, 1]} (\tau)
 I_{[0, \tau)}(t_{1})
 ]
 - \mathbb E^{m_{0}} [X_{t_{2}} 
 %I_{[t_{2}, 1]} (\tau)
 I_{[0, \tau)}(t_{2}) 
 ] 
 \Big |^{2} 
 \\& \displaystyle
  \le 2 \lambda^{2} \Big |\mathbb E^{m_{0}} [X_{t_{1}} I_{[t_{1}, t_{2}]} (\tau)] \Big |^{2}
 + 2 \lambda^{2}  \Big|\mathbb E^{m_{0}} [(X_{t_{1}} - X_{t_{2}}) 
 I_{[t_{2}, T]} (\tau)] 
 \Big|^{2}
  \\& \displaystyle
\le 2 \lambda^{2} \Big |\mathbb E^{m_{0}} [ I_{[t_{1}, t_{2}]} (\tau) ] \Big |^{2}
+ 2 \lambda^{2}  \mathbb E^{m_{0}} [|X_{t_{1}} - X_{t_{2}}|^{2} ] 
 \\& \displaystyle
 \le \lambda^{2} K(|b|_{0}) \Big |t_{1} - t_{2} \Big|, 
 \quad \hbox{ by (D9) of \cite{FS06}}.
\end{array}
$$
Therefore, we have
$$[\beta]_{1/2} \le \lambda K^{1/2}(b_{0}).$$
Thus, if we choose $M = 1 + \lambda K^{1/2}(|b_{0}|)$, we shall have
$$|\beta|_{1/2} = |\beta|_{0} + [\beta]_{1/2} \le  1 + \lambda K^{1/2}(|b_{0}|)  = M.$$
\end{proof}

\section{Summary}

In this note, we showed that under assumption  (A1) on the drift function
and the initial density, the killed McKean-Vlasov process \eqref{eq:sde01}
solves FPK \eqref{eq:ifpk01}. We 
observe that uniqueness of the solution to the FPK  \eqref{eq:ifpk01} 
has not been established in this paper  and this will be the subject of 
future work.
If the uniqueness were true, as an application, one can approximate 
the population $p$th mean\eqref{eq:YN02} of large system in a bounded region by 
its associated FPK. It may  also be interesting to consider 
the estimates of the population $p$th moment 
normalized by the number of survivals, 
which is closely related to $Y^{N}$ given by \eqref{eq:YN02}.

\appendix
\section{Appendix}

\subsection{H\"older Space}\label{sec:holder}
For convenience, we introduce the notion of elliptic H\"older space
$C^{k + \gamma}(\mathcal D; \mathcal R)$
 and parabolic 
H\"older space $C^{\frac \gamma 2, \gamma}(\mathcal D \times (0, T); \mathcal R)$ from \cite{Kry96}.  
In this paper, the domain $\mathcal D$ 
may be $B_{1}$, $B_{1}^{2}$ or $(0, T)$; 
and the range $\mathcal R$ may be $\mathbb R$ or $\mathbb R^{d}$. 
If the range $\mathcal R$ is $\mathbb R$, then $\mathcal R$ may be abbreviated, for instance, $C^{\gamma}(\mathcal D)$ means $C^{\gamma}(\mathcal D; \mathbb R)$. 
By uniform continuity, we shall treat $C^{\gamma}(\mathcal D)$ as the same as $C^{\gamma}(\bar {\mathcal D})$ by natural bijective isometry. 
\subsubsection{Elliptic H\"older 
Space}

Let $\mathcal D$ be a domain in $\mathbb R^{d}$ and $\mathcal R$ be a range in $\mathbb R^{d_{1}}$. 
For $u: \mathcal D \mapsto \mathcal R$, 
we define a uniform norm by $|u|_{0} = \sup_{\mathcal D} |u|$, and
we denote by $\partial^{\alpha_{i}}_{x_{i}} u$
the $\alpha_{i}$-th order partial derivative in the variable $x_{i}$, if it exists.
For multiindex $\alpha = (\alpha_{i}: i = 1, \ldots d)$, 
we use $D^{\alpha} u = \partial_{x_{1}}^{\alpha_{1}} \cdots \partial_{x_{d}}^{\alpha_{d}} u$.

For 
$k\in \mathbb N \cup \{0\}$, we denote by $C^{k}_{loc} (\mathcal D, \mathcal R)$ the set of all functions $u:\mathcal D \mapsto \mathcal R$ whose derivatives $D^{\alpha} u$ for $|\alpha| \le k$ are continuous in $\mathcal D$. One can define a norm in $C^{k}_{loc}(\mathcal D, \mathcal R)$ by
$$|u|_{k} = \sum_{i = 0}^{k} \max_{|\alpha| = i} |D^{\alpha} u |_{0}.$$
Then the functions $u$ having finite norm consists of Banach space, and we refer it to $C^{k}(\mathcal D, \mathcal R)$. For instance, $u = e^{x}:\mathbb R \mapsto \mathbb R$ belongs to $C^{k}_{loc}(\mathbb R, \mathbb R)$ but not $C^{k} (\mathbb R, \mathbb R)$.

For $\gamma \in (0,1]$, we can also define a H\"older seminorm for a function $u\in C(\mathcal D, \mathbb R)$ by, 
$$[u]_{\gamma} = \sup_{x,y\in \mathcal D, x \neq y} 
\frac{|u(x) - u(y)|}{|x - y|^{\gamma}}.$$

\begin{definition}
 \label{d:hs01}
 For a decimal number $\gamma \in (0, 1]$ and an integer
 $k \in \mathbb N \cup \{0\}$,  H\"older space
 $C^{k + \gamma}(\mathcal D, \mathcal R)$ is the Banach space of all 
 functions $u \in C^{k}(\mathcal D, \mathcal R)$ for which the norm
 $$|u|_{k + \gamma} = |u|_{k} + \max_{|\alpha| = k} [D^{\alpha} u]_{\gamma}$$
 is finite. 
 \end{definition}
 In the above, we emphasize that $\gamma$ is a decimal number (writing with decimal point) and $k$ is an integer to avoid the following ambiguity. Note that $C^{1.0}(\mathcal D, \mathcal R)$ is $1$-H\"older space (or Lipschitz continuous space) with a finite norm w.r.t.
 $$|u|_{1.0} =  |u|_{0} + \sup_{x,y\in \mathcal D, x \neq y} 
\frac{|u(x) - u(y)|}{|x - y|^{\gamma}},$$
while $C^{1}(\mathcal D, \mathcal R)$ is a continuous differentiable function space with a finite norm
 w.r.t.
 $$|u|_{1} = |u|_{0} + \max_{i = 1}^{d} |\partial_{x_{i}} u|_{0}.$$
 For instance, 
 $f(x) = |x|$ is in $C^{1.0} ([-1, 1]) \setminus C^{1}  ([-1, 1])$
 with its norm
 $$|f|_{1.0} = |f|_{0} + [f]_{1.0} = 2.$$
 Another example is that $g(x) = x^{2} {\rm sgn}(x)$ is in $C^{2.0}([-1,1] \setminus C^{2}([-1, 1])$ with 
 $$|g|_{2.0} = |g|_{0} + |g'|_{0} + [g']_{1.0} = 5. $$
 In general, $C^{k+1}(\mathcal D, \mathcal R)$ is a proper subset of $C^{k+1.0}(\mathcal D, \mathcal R)$.  
 
Next, we use the extension of $u:\mathcal D\mapsto \mathcal R$
with $\tilde u(x) = u(x) I_{\mathcal D}(x): \mathbb R^{d} \mapsto \mathbb R^{d_{1}}$ by taking values the same as $u$ in $\mathcal D$ otherwise zero. 
\begin{definition}
Let $\mathcal D$ be a bounded set in $\mathbb R^{d}$. The space $C_{0}^{k+\gamma} (\mathcal D, \mathcal R)$ is defined by
 $$C_{0}^{k+\gamma} (\mathcal D, \mathcal R) = 
 \{ u \in C^{k+\gamma} (\mathcal D, \mathcal R):
  u(x) I_{\mathcal D}(x) \in C^{k + \gamma} (\mathbb R^{d}, \mathcal R)\}.$$
\end{definition}
 
\subsubsection{Parabolic H\"older Space}
Let $\mathcal D$ be a domain in $\mathbb R^{d}$, $\mathcal Q = \mathcal D \times (0, T)$ be the parabolic domain in $\mathbb R^{d+1}$ for some $T>0$, and
$\mathcal R$ be a range in $\mathbb R^{d_{1}}$. We are going to define norms for $u: \mathcal Q \mapsto \mathcal R$ in this below.

First, we define parabolic metric on $\mathbb R^{d+1}$: for any 
$z_{1} = (x_{1}, t_{1}), z_{2} = (x_{2}, t_{2}) \in \mathbb R^{d+1}$
$$\rho(z_{1}, z_{2}) =|x_{1} - x_{2}|  +  |t_{1} - t_{2}|^{1/2}.$$ 
Then, we set the parabolic H\"older seminorm for $u\in C(\mathcal Q)$ by,
$\gamma \in (0,1)$
$$[u]_{\gamma, \gamma/2} = \sup_{z_{1}, z_{2} \in \mathcal Q, z_{1} \neq z_{2}} \frac{|u(z_{1}) - u(z_{2})|}{\rho^{\gamma}(z_{1}, z_{2})}.$$

\begin{definition}
 \label{d:hs02}
 For $\gamma \in (0, 1]$ and $k \in \mathbb N \cup \{0\}$, the 
 parabolic H\"older space
 $C^{2k + \gamma, k + \gamma/2}(\mathcal Q, \mathcal R)$ 
 is the Banach space of all 
 functions $u \in C(\mathcal Q, \mathcal R)$ for which the norm
 $$|u|_{2k + \gamma, k + \gamma/2} = |u|_{0} + 
 \sum_{i = 1}^{k} |D^{i}_{t} u|_{0} + \sum_{i = 1}^{2k} 
 \max_{|\alpha| = i} |D_{x}^{\alpha} u|_{0} + 
 \max_{|\alpha| = 2k} [ D_{t}^{k} D_{x}^{\alpha} u]_{\gamma, \gamma/2}.$$
 is finite.
\end{definition}
In this text, we only use $C^{2k + \gamma, k + \gamma/2}(\mathcal Q, \mathcal R)$ for $k = 0, 1$.
We also need the following elementary fact between elliptic 
H\"older and 
parabolic H\"older spaces. 
\begin{proposition}
 \label{p:holder01}
  If
 $f\in C^{\delta} ((0, T); B_{1})$ and $g\in C^{\gamma}(B_{1}^{2})$ be two functions
 for some constants 
 $\delta \in (0, 1/2]$ and $\gamma \in (0, 1]$, then $h(x, t) = g(x, f(t))$ 
 belongs to $C^{2 \delta \gamma, \delta \gamma} ( B_{1} \times (0, T))$
 with 
 $$[ h ]_{2 \delta \gamma, \delta \gamma} \le K [g]_{\gamma} ([f]_{\delta} +1).$$

\end{proposition}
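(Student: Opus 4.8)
The plan is to estimate the parabolic Hölder seminorm of $h$ directly from its definition, reducing it to the given Hölder regularity of $g$ and $f$ and then absorbing the mismatch between the exponents $\gamma$ and $2\delta\gamma$ into a domain-dependent constant. Fix two points $z_1 = (x_1, t_1)$ and $z_2 = (x_2, t_2)$ in $B_1 \times (0,T)$, and write $a = |x_1 - x_2|$ and $s = |t_1 - t_2|^{1/2}$, so that $\rho(z_1, z_2) = a + s$. Since $h(z_i) = g(x_i, f(t_i))$, applying the $\gamma$-Hölder continuity of $g$ on $B_1^2$ (using that the Euclidean metric on $B_1^2$ is dominated by the sum metric) gives
$$|h(z_1) - h(z_2)| \le [g]_\gamma \bigl( |x_1 - x_2| + |f(t_1) - f(t_2)| \bigr)^\gamma \le [g]_\gamma \bigl( a + [f]_\delta\, s^{2\delta} \bigr)^\gamma,$$
where the last step uses the $\delta$-Hölder continuity of $f$, namely $|f(t_1) - f(t_2)| \le [f]_\delta |t_1 - t_2|^\delta = [f]_\delta\, s^{2\delta}$.

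Next I would use subadditivity of $r \mapsto r^\gamma$ for $\gamma \le 1$ to split the right-hand side as $a^\gamma + [f]_\delta^\gamma\, s^{2\delta\gamma}$, and then compare each term separately to $\rho^{2\delta\gamma} = (a+s)^{2\delta\gamma}$. For the time term this is immediate, since $s \le \rho$ gives $s^{2\delta\gamma} \le \rho^{2\delta\gamma}$. The spatial term is where the exponents do not match: one has $a \le \rho$, hence $a^\gamma \le \rho^\gamma$, but the target exponent is the smaller $2\delta\gamma$ (recall $2\delta \le 1$). Here I would invoke the boundedness of the parabolic domain: since $\rho \le R := \mathrm{diam}(B_1) + T^{1/2}$ and $\gamma - 2\delta\gamma = \gamma(1 - 2\delta) \ge 0$, one gets $a^\gamma \le \rho^\gamma = \rho^{2\delta\gamma}\,\rho^{\gamma(1-2\delta)} \le R^{\gamma(1-2\delta)}\, \rho^{2\delta\gamma}$. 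Combining the two terms, and using $[f]_\delta^\gamma \le 1 + [f]_\delta$, yields $|h(z_1) - h(z_2)| \le K\, [g]_\gamma\, (1 + [f]_\delta)\,\rho^{2\delta\gamma}$ with $K$ depending only on $\gamma, \delta$ and the fixed finite quantity $R$; dividing by $\rho^{2\delta\gamma}$ and taking the supremum over $z_1 \neq z_2$ gives the claimed bound on $[h]_{2\delta\gamma, \delta\gamma}$. Continuity of $h$ together with $|h|_0 \le |g|_0 < \infty$ then places $h$ in $C^{2\delta\gamma, \delta\gamma}(B_1 \times (0,T))$.

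The only genuinely delicate point is the spatial estimate $a^\gamma \le K \rho^{2\delta\gamma}$: because the intrinsic spatial exponent produced by composing with $g$ is $\gamma$, which exceeds the anisotropic parabolic exponent $2\delta\gamma$, the inequality can hold only with a constant, and that constant necessarily depends on the boundedness of the domain. This is the familiar fact that on a bounded set a higher Hölder exponent embeds into a lower one, and it is precisely why the hypothesis is posed on the bounded ball $B_1$ over a finite horizon $T$. Everything else is elementary: the Euclidean-versus-sum metric comparison on $B_1^2$ and the subadditivity of $r \mapsto r^\gamma$ each contribute only harmless absolute constants, while the restriction $\delta \le 1/2$ enters solely to ensure $2\delta \le 1$, which keeps $2\delta\gamma$ a legitimate exponent in $(0,1]$ and keeps the exponent comparison pointing in the right direction.
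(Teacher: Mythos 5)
Your proof is correct and follows essentially the same route as the paper's: apply the H\"older continuity of $g$, then of $f$, and absorb the exponent mismatch between $\gamma$ and $2\delta\gamma$ into a constant using the boundedness of $B_{1}\times(0,T)$ together with $2\delta\le 1$. Your write-up is in fact more careful than the paper's (which omits the power $\gamma$ on the right-hand side of its first display, evidently a typo, and compresses the exponent-comparison step into one line), but the underlying argument is identical.
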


\begin{proof}
 The result follows from the following inequalities:
$$
|h(x_{1}, t_{1}) - h(x_{2}, t_{2})| 
= |g(x_{1}, f(t_{1})) - g(x_{2}, f(t_{2}))|
\le [g]_{\gamma} (|f(t_{1}) -f(t_{2})|^{2} + | x_{1} -  x_{2}|^{2})
$$
Since $|f(t_{1}) -f(t_{2})| \le [f]_{\delta} |t_{1} - t_{2}|^{\delta}$, 
$|t_{1} - t_{2}| + |x_{1} - x_{2}|\le 3$, and $2\delta \le 1$, we conclude that
$$|h(x_{1}, t_{1}) - h(x_{2}, t_{2})|   \le K [g]_{\gamma} ([f]_{\delta} +1) 
(|t_{1} - t_{2}|^{1/2} + |x_{1} - x_{2}|)^{2\delta \gamma}.
$$

\end{proof}

\subsection{Relations between PDEs and SDE without mean field term}

We consider the density of a killed process on the unit ball $B_{1}$ given by
\begin{equation}
 \label{eq:sde02}
dX_{t} = (\bar b(X_{t}, t) dt + dW_{t}) I_{B_{1}}(X_{t}), \  X_{0} \sim m_{0}
\end{equation}
for a function $\bar b\in C^{2 +  \delta, 1 + \frac \delta 2}(B_{1} \times (0, T))$.
One can define a semigroup $\{P_{s,t}: 0 \le s \le t\}$ on a Banach space $C_{0} = C_{0}(B_{1})$ by 
$$P_{s, t} f(x) = \mathbb E^{x, s} [ f(X_{t}) ].$$ 
It can be checked that 
\begin{itemize}
 \item %(Identity) 
 $P_{t, t} = I$;
 \item %(Closure and Associativity) 
 $P_{s, t} P_{t, r} = P_{s,r}$ for $0\le s \le t \le r$;
\end{itemize}
Recall that the generator
$$L_{t} f(x) = \lim_{h} \frac{P_{t, t+h} - I}{h} f(x).$$
In this case, the generator can be written explicitly as
$$ L_{t} f(x) = \bar b \circ \nabla f + \frac 1 2 \Delta f$$
and the domain $\mathcal D(L_{t})$ of the generator includes the smooth test function set $C^{\infty}_{0}(B_{1})$. Moreover, the adjoint operator of $L_{t}$ is given by
$$ L_{t}^{*} f(x) = - {\rm div}_x (\bar b \ f) + \frac 1 2 \Delta f.$$

Formally, if we denote the density of $X_{t}$ on 
$B_{1}$ by $m(t, \cdot)$, i.e. 
$$P_{0, t} f(x) = (m_{t}, f), \ \forall f\in C_{0}.$$
It is also noted that, $X_{t}$ is a submarkovian on $B_{1}$, since $\int_{B_{1}} m_{t}(x) dx = 1 - \mathbb P(X_{t} \in \partial B_{1}) \le 1$.
One can carry out
 $$\frac d {dt} P_{s,t} f(x) = \lim_{h} \frac{P_{s, t+h} - P_{s,t}}{h} f(x) = 
 P_{s, t} L_{t} f(x), \ \forall f\in C_{0}^{\infty}.$$
Taking $s = 0$, it becomes
 $$\frac d{dt} (m_{t}, f) = (m_{t}, L_{t} f), \  \forall f\in C_{0}^{\infty}$$
 which implies the Kolmogorov forward equation with appropriate initial-boundary conditions:
\begin{equation}
 \label{eq:fpk02}
 \left\{
\begin{array}
 {ll}
\partial_{t} m =L^{*}_{t} m, & B_{1} \times (0, T), \\
m(x, 0) = m_{0}(x), & x\in \bar B_{1}, \\
m(x, t) = 0, &  \partial B_{1} \times (0, T).
\end{array}\right.
\end{equation}

 Next, if we denote 
$u(x, t) = P_{t, T} g(x)$ for some fixed $g\in C_{0}$, 
and $u \in C^{1,2}$ for some fixed $T$, then we can write
 $$\frac d {dt} P_{t, r} g(x) = \lim_{h} \frac{ P_{t + h, r} - P_{t, r}}{h} g(x) = 
 \lim_{h} \frac{ I - P_{t, t+h} }{h}  P_{t + h, r} g(x) = 
 - L_{t} P_{t,r} g(x).$$
This implies, by taking $r = T$, the  Kolmogorov backward equation with some terminal-boundary conditions:
\begin{equation}
 \label{eq:kbe01}
 \left\{
\begin{array}
 {ll}
\partial_{t} u + L_{t} u = 0, & B_{1} \times (0, T), \\
u(x, T) = g(x), & x\in \bar B_{1}, \\
u(x, t) = 0, & \partial B_{1} \times (0, T).
\end{array}\right.
\end{equation}

\begin{proposition}
 \label{p:ke01}
\end{proposition}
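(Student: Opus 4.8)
The plan is to make rigorous the formal semigroup computation carried out just above, establishing a two-way correspondence between the killed diffusion \eqref{eq:sde02} and the forward equation \eqref{eq:fpk02}: the law of $X_t$ admits a density $m_t$, this density is precisely the unique classical solution of the Cauchy--Dirichlet problem \eqref{eq:fpk02}, and conversely any classical solution of \eqref{eq:fpk02} is the density of the killed process. I would proceed in three stages — existence of the killed process and its semigroup, the backward Feynman--Kac representation, and the passage to the forward equation by duality — and then close the loop with a uniqueness argument, so that the stochastic representation invoked in Lemma \ref{l:fpt02} and the identification used in the proof of Theorem \ref{t:main} are both justified.

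First I would settle the probabilistic side. Since $\bar b \in C^{2+\delta, 1+\delta/2}(B_{1} \times (0,T))$ is in particular bounded and Lipschitz in $x$, it extends to a globally Lipschitz drift on $\mathbb R^{d}$, so the unkilled SDE $dX_{t} = \bar b(X_{t}, t)\,dt + dW_{t}$ has a unique strong solution; killing at the first exit time $\zeta = \inf\{t : X_{t} \notin B_{1}\}$ then yields a well-defined submarkovian process and the family $P_{s,t}f(x) = \mathbb E^{x,s}[f(X_{t})]$ with the semigroup properties already recorded. Next, for a test function $g \in C_{0}^{\infty}(B_{1})$ I would invoke the parabolic Schauder theory (Theorem 10.3.3 of \cite{Kry96}, using $\bar b \in C^{\delta, \delta/2}$) to produce the unique classical solution $u \in C^{2+\delta, 1+\delta/2}$ of the backward problem \eqref{eq:kbe01}. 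Applying It\^o's formula to $u(X_{s}, s)$ on $[t, T\wedge\zeta]$, the drift-plus-diffusion terms cancel against $\partial_{t} u + L_{t} u = 0$, while the Dirichlet condition $u = 0$ on $\partial B_{1}$ kills the contribution of paths that exit before $T$; taking expectations gives the Feynman--Kac identity $u(x,t) = \mathbb E^{x,t}[g(X_{T}) I_{\{T < \zeta\}}] = P_{t,T} g(x)$.

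To reach the forward equation I would dualize. Differentiating the semigroup relation and integrating by parts, the smoothness just obtained legitimizes the formal computation $\frac{d}{dt}(m_{t}, f) = (m_{t}, L_{t} f) = (L_{t}^{*} m_{t}, f)$ for all $f \in C_{0}^{\infty}(B_{1})$, which is the distributional form of $\partial_{t} m = L_{t}^{*} m$. Because the coefficients of $L_{t}^{*}$ are H\"older and the principal part is the nondegenerate Laplacian, Schauder theory again furnishes a unique classical solution $\tilde m \in C^{2+\delta, 1+\delta/2}(B_{1} \times (0,T))$ of \eqref{eq:fpk02}, and the weak identity forces $\tilde m = m$; in particular the density exists as a genuine function and vanishes on $\partial B_{1}$. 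Uniqueness of the classical solution of \eqref{eq:fpk02}, hence the identification of the density, follows from the parabolic maximum principle applied to the difference of two solutions, or equivalently from the a priori estimate of Proposition \ref{p:ke02}.

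The main obstacle is the behavior at the absorbing boundary. Two points require care: first, showing that the killed law is absolutely continuous with a density continuous up to $\partial B_{1}$ and vanishing there, rather than merely a sub-probability measure, which rests on the nondegeneracy of the diffusion together with the regularity of $\partial B_{1}$ and the boundary Schauder estimates; second, justifying the integration by parts up to the boundary in the duality step, where killing produces a genuine outward probability flux. The Dirichlet conditions $u = 0$ and $m = 0$ on $\partial B_{1}$ make the boundary terms vanish, but one must verify that the formal semigroup manipulations preceding the statement are licensed by the classical regularity supplied by the Schauder estimates, and that the probabilistic density coincides with the PDE solution rather than differing by a boundary-supported measure. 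Everything else reduces to routine parabolic theory and It\^o calculus.
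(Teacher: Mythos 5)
Your proposal is correct, and it is genuinely more self-contained than the paper's own proof, which is essentially a citation argument: the paper rewrites \eqref{eq:fpk02} in non-divergence form, invokes Theorem 10.3.3 of \cite{Kry96} for the unique solvability and $C^{2+\delta,1+\delta/2}$ regularity of both \eqref{eq:fpk02} and \eqref{eq:kbe01}, and then outsources the two probabilistic identifications — the relation between \eqref{eq:fpk02} and the transition density of the submarkovian process, and the representation $u = P_{t,T}g$ — to Section 4.1 of \cite{Pav14} and Section 40.2 of \cite{Bas11}, respectively. You share the Schauder backbone but replace those two citations with direct arguments: It\^o's formula applied to the backward Schauder solution on $[t, T\wedge\zeta]$, with the Dirichlet condition killing the exit contribution (the content of the Bass reference), and duality between the measure-valued law and the classical forward solution, with boundary terms vanishing because both $u$ and the solution of \eqref{eq:fpk02} vanish on $\partial B_1$ (the content of the Pavliotis reference). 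This buys an argument tailored exactly to the bounded-domain, time-inhomogeneous, submarkovian setting, which the cited references only approximately cover. One step you should spell out: your phrase ``the weak identity forces $\tilde m = m$'' is really the duality computation $(\tilde m_T, g) = (m_0, u(\cdot,0)) = (m_T, g)$ for all test functions $g$, using your Feynman--Kac representation as the dual tool; uniqueness of measure-valued weak solutions of the forward equation is not free and this is precisely how one gets it. Finally, note that both you and the paper gloss over the fact that the stated hypothesis $m_0, g \in C_0^{\delta}(B_1)$ is too weak for $C^{2+\delta,1+\delta/2}$ regularity up to the initial (resp.\ terminal) time; global Schauder estimates require $C^{2+\delta}$ data with compatibility at the corner, as is in fact assumed in (A1) and in Proposition \ref{p:ke02}.
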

Assume 
$\bar b \in C^{2 +  \delta, 1 + \frac \delta 2}((0, \infty) \times B_{1})$, 
$m_{0}, g \in C_{0}^{\delta} (B_{1})$. Then, 
\begin{enumerate}
 \item  The density $m(x, t)$ of $X$ of \eqref{eq:sde02} on the open set $B_{1}$ is the unique solution of \eqref{eq:fpk02} in $C^{2 + \delta, 1+\delta/2}$;
 \item $u(x, t) = P_{t, T} g(x)$ is the unique solution of \eqref{eq:kbe01} in $C^{2 + \delta, 1+\delta/2}$.
\end{enumerate}
\begin{proof}
One can first write down non-divergence form of \eqref{eq:fpk02}. Then, 
the uniqueness of the solution and its regularity result of \eqref{eq:fpk02} and \eqref{eq:kbe01} directly follows from Theorem 10.3.3 of [2]. The relation of \eqref{eq:fpk02} and transition density of the submarkovian process $X$ is referred to Section 4.1 of \cite{Pav14}. 
%\footnote{?? The statement of the reference is not quite satisfactory}
The stochastic representation of \eqref{eq:kbe01} to the function $P_{t, T}g(x)$ is referred to 
Section 40.2 of \cite{Bas11}.
\end{proof}

We need the following estimate in the sequel. Consider 
\begin{equation}
 \label{eq:kbe03}
 \left\{
\begin{array}
 {ll}
\partial_{t} u  = L_{t} u + c u, & B_{1} \times (0, T), \\
u(x, 0) = g(x), & x\in \bar B_{1}, \\
u(x, t) = 0, & \partial B_{1} \times (0, T).
\end{array}\right.
\end{equation}

\begin{proposition}
\label{p:ke02} 
\end{proposition}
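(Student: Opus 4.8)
The plan is to obtain this bound as a global Schauder estimate for the linear Cauchy--Dirichlet problem \eqref{eq:kbe03}, with the sign condition $c \le 0$ entering only to control the zeroth-order contribution through the maximum principle. First I would record the existence and uniqueness of a solution $u \in C^{2+\delta, 1+\delta/2}(B_1 \times (0,T))$: since $\bar b, c \in C^{\delta, \delta/2}$ and the parabolic boundary is smooth, this follows from the same parabolic Schauder theory invoked in Proposition \ref{p:ke01} (Theorem 10.3.3 of \cite{Kry96}), provided the compatibility condition holds at the corner $\partial B_1 \times \{0\}$. This is exactly where the hypothesis $g \in C_0^{2+\delta}(B_1)$ is used: because $g$ vanishes to second order along $\partial B_1$, we have $g|_{\partial B_1} = 0$ and $(L_t g + c g)|_{\partial B_1} = 0$, so the Dirichlet data and the initial data are compatible to the order required for a $C^{2+\delta,1+\delta/2}$ solution.

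Next I would establish the uniform bound $|u|_0 \le |g|_0 \le |g|_{2+\delta}$. Writing the equation in the non-divergence form $\partial_t u = \tfrac12 \Delta u + \bar b \circ \nabla u + c u$, the weak maximum principle for parabolic operators with a nonpositive zeroth-order coefficient forces the extrema of $u$ over $\bar B_1 \times [0,T]$ onto the parabolic boundary, where $u$ equals either $g$ or $0$; hence $|u|_0 \le |g|_0$. The role of $c \le 0$ is precisely to avoid the exponential-in-$T$ factor that a general zeroth-order term would produce, keeping the eventual constant a function of the coefficient norms alone and explaining the $e^{-\lambda t}$ substitution performed in Lemma \ref{l:T1}.

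Finally I would invoke the global (interior together with boundary) Schauder estimate for \eqref{eq:kbe03}, which yields
$$|u|_{2+\delta, 1+\delta/2} \le K\big(|\bar b|_{\delta,\delta/2}, |c|_{\delta,\delta/2}\big)\big(|u|_0 + |g|_{2+\delta}\big).$$
Combining this with the $L^\infty$ bound from the previous step absorbs $|u|_0$ into $|g|_{2+\delta}$ and delivers the claimed inequality. The smoothness of $\partial B_1$ as a hypersurface guarantees that the boundary portion of the estimate applies with a constant depending only on the coefficient norms and the fixed geometry of $B_1$.

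I expect the main obstacle to lie at the boundary rather than in the interior: confirming that $u$ genuinely belongs to $C^{2+\delta, 1+\delta/2}$ up to $\partial B_1 \times \{0\}$ rests on the compatibility conditions between $g$ and the homogeneous Dirichlet data, and it is the assumption $g \in C_0^{2+\delta}(B_1)$, forcing $g$ together with its first and second derivatives to vanish along $\partial B_1$, that supplies them. Verifying that the Schauder constant depends on $\bar b$ and $c$ only through their $C^{\delta,\delta/2}$ norms is then routine, since the principal part $\tfrac12 \Delta$ is fixed and uniformly elliptic.
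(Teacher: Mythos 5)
Your proposal is correct and follows essentially the same route as the paper: the paper's proof simply cites Krylov's parabolic Schauder theory (Theorem 10.3.3 of \cite{Kry96} for unique solvability of the Cauchy--Dirichlet problem and Theorem 10.2.2 for the estimate), and what you have written is the standard argument packaged inside those citations --- compatibility at the corner supplied by $g \in C_0^{2+\delta}$, the maximum principle with $c \le 0$ giving $|u|_0 \le |g|_{2+\delta}$, and the global Schauder a priori bound absorbing $|u|_0$. You also correctly identified why the sign condition $c\le 0$ is imposed, namely so that the constant depends only on the coefficient norms, which is exactly what the exponential substitution in Lemma \ref{l:T1} is designed to exploit.
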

If $b, c \in C^{\delta, \delta/2}$, $g\in C_{0}^{2 + \delta}$, and $c\le 0$, 
then 
\eqref{eq:kbe03} is uniquely solvable satisfying
$$|u|_{2 + \delta, 1+\delta/2} \le K(|b|_{\delta, \delta/2}, |c|_{\delta, \delta/2}) 
|g|_{2 + \delta}.$$
\begin{proof}
 Unique solvability is implied by Theorem 10.3.3 of \cite{Kry96} and 
 the estimate is given by Theorem 10.2.2 of \cite{Kry96}.
\end{proof}

\bibliographystyle{plain}
\bibliography{/Users/songqsh/Dropbox/R/refs}
\end{document}